\newcommand{\Z}{\mathbb{Z}}						
\newcommand{\R}{\mathbb{R}}						
\newcommand{\C}{\mathbb{C}}						
\newcommand{\B}{\mathbb{B}}
\newcommand{\eps}{\varepsilon}					
\newcommand{\dd}								
	{\mathop{}\!\mathrm{d}}						
\newcommand{\ddn}[1]							
	{\mathop{}\!\mathrm{d^{#1}}}
\newcommand{\abs}[1]							
	{\left| #1 \right|}
\newcommand{\smallabs}[1]						
	{\lvert #1 \rvert}	
\newcommand{\norm}[1]							
	{\left\lVert #1 \right\rVert}	
\newcommand{\smallnorm}[1]						
	{\lVert #1 \rVert}						
\newcommand{\ip}[2]								
	{\left< #1 , #2 \right>}
\DeclareMathOperator{\intr}{int}				
\DeclareMathOperator{\capac}{Cap}			
\newcommand{\loc}{\mathrm{loc}}
\newcommand{\qrval}{\Sigma}
\newcommand{\cH}{\mathcal{H}}
\newtheorem{thm}{Theorem}[section]{\bf}{\it}
\newtheorem{lemma}[thm]{Lemma}
\newtheorem{cor}[thm]{Corollary}
\newenvironment{namedthm}[1]
{\innernamedthm}
{\endinnercustomthm}
\theoremstyle{definition}
\newtheorem{defn}[thm]{Definition}
\newtheorem{ex}[thm]{Example}
\theoremstyle{remark}
\numberwithin{equation}{section}
\begin{document}

\title{A single-point Reshetnyak's theorem}

\author[I. Kangasniemi]{Ilmari Kangasniemi}
\address{Department of Mathematics, Syracuse University, Syracuse,
NY 13244, USA }
\email{kikangas@syr.edu}

\author[J. Onninen]{Jani Onninen}
\address{Department of Mathematics, Syracuse University, Syracuse,
NY 13244, USA and  Department of Mathematics and Statistics, P.O.Box 35 (MaD) FI-40014 University of Jyv\"askyl\"a, Finland
}
\email{jkonnine@syr.edu}

\subjclass[2020]{Primary 30C65; Secondary 35R45}
\date{\today}
\keywords{Reshetnyak's theorem, Quasiregular, QR, Discrete, Open, Sense-preserving, Branched cover, Quasiregular value, Heterogeneous distortion inequality}

\maketitle

\begin{center}
{\it Dedicated to the memory of Yurii Reshetnyak}
\end{center} 

\begin{abstract}
	We prove a single-value version of Reshetnyak's theorem. Namely, if a non-constant  map  $f \in W^{1,n}_{\text{loc}}(\Omega, \mathbb{R}^n)$ from a domain $\Omega \subset \mathbb{R}^n$ satisfies the estimate $\lvert Df(x) \rvert^n \leq K J_f(x) + \Sigma(x) \lvert f(x) - y_0 \rvert^n $ for some $K \geq 1$, $y_0\in \mathbb{R}^n$ and $\Sigma \in L^{1+\varepsilon}_{\text{loc}}(\Omega)$, then $f^{-1}\{y_0\}$ is discrete, the local index $i(x, f)$ is positive in $f^{-1}\{y_0\}$, and every neighborhood of a point of $f^{-1}\{y_0\}$ is mapped to a neighborhood of $y_0$. Assuming this estimate for a fixed $K$ at every $y_0 \in \mathbb{R}^n$ is equivalent to assuming that the map $f$ is $K$-quasiregular, even if the choice of $\Sigma$ is different for each $y_0$. Since the estimate also yields a single-value Liouville theorem, it hence appears to be a good pointwise definition of $K$-quasiregularity. As a corollary of our single-value Reshetnyak's theorem, we obtain a higher-dimensional version of the argument principle that played a key part  in the solution to the Calder\'on problem.
\end{abstract}

\section{Introduction}

For a given domain (i.e.\ an open connected set)  $\Omega \subset \R^n$ with $n \geq 2$, a mapping $f \colon \Omega \to \R^n$ is called \emph{$K$-quasiregular} for $K \geq 1$ if $f$ is in the local Sobolev space $W^{1,n}_\loc(\Omega, \R^n)$ and
\begin{equation}\label{eq:qr-def}
	\abs{Df(x)}^n \leq K J_f(x)
\end{equation}
for almost every (a.e.) $x \in \Omega$. Here, $\abs{Df(x)}$ denotes the operator norm of the weak derivative $Df(x)$ at $x$, and $J_f(x) = \det Df(x)$ is the Jacobian determinant. A mapping $f$ is then called \emph{quasiregular} if it is $K$-quasiregular for some $K \geq 1$. The synonymous term \emph{mapping of bounded distortion} is also used in the literature~\cite{Reshetnyak-book}. 

Despite the assumptions being entirely analytic, the distortion inequality \eqref{eq:qr-def} implies multiple topological regularity properties for quasiregular maps. For instance, every quasiregular map has a continuous representative \cite{Reshetnyak_continuity}. This for instance follows from the fact that quasiregular mappings belong to a higher
Sobolev class $W^{1,n+\varepsilon}_\loc(\Omega, \R^n)$ than initially assumed, by use of Gehring's lemma and reverse H\"older inequalities~\cite{Gehring, Iwaniec-GehringLemma,  Meyers-Elcrat_HigherInt}.

The most fundamental topological consequence of \eqref{eq:qr-def} is, however, a deep result of Reshetnyak~\cite{Reshetnyak_QROrigin, Reshetnyak_Theorem2}.

\begin{namedthm}{Reshetnyak's theorem}
	A non-constant quasiregular map is open, discrete, and sense preserving.
\end{namedthm}

Here, a continuous map $f \colon \Omega \to \R^n$ is \emph{open} if $f(U)$ is an open set for every open $U \subset \Omega$, \emph{discrete} if $f^{-1} \{y\}$ is a discrete subset of $\Omega$ for every $y \in \R^n$, and \emph{sense-preserving} if $f$ has locally positive topological degrees. In particular, the discreteness of $f$ allows for the definition of a local topological index $i(x, f) \in \Z$ at every $x \in \Omega$, and the sense-preserving part then states that $i(x,f) > 0$. Thus, quasiregular
mappings are generalized branched coverings with bounded distortion.  Reshetnyak's work permitted the geometric methods of modulus and curve families to be used to great effect in building a theory analogous to that of analytic functions in the complex plane; see the monographs~\cite{Iwaniec-Martin_book, Reshetnyak-book, Rickman_book, Vuorinen_book}.

On the other hand, in \cite[Section 8.5]{Astala-Iwaniec-Martin_Book}, Astala, Iwaniec and Martin introduced a generalization of \eqref{eq:qr-def} of the form 
\begin{equation}\label{eq:qrvalue0}
	\abs{Df(x)}^n \leq K J_f(x) + \qrval(x) \abs{f(x)}^n,
\end{equation}
where $\qrval$ is a locally integrable function. This is a higher dimensional version of the planar Beltrami-type equation
\begin{equation}\label{eq:qrvalue0in2D}
	\partial_{\overline{z}} f(z) = \mu(z) \partial_z f(z) + A(z) f(z),
\end{equation}
where $f, \mu, A \colon \Omega \to \C, \Omega \subset \C$, are complex functions with $\norm{\mu}_{L^\infty(\Omega)} < 1$ and $A \in L^2_\loc(\Omega)$. Solutions of \eqref{eq:qrvalue0in2D} have been studied in e.g.\ \cite{Vekua_generalized-analytic}, and are connected to the pseudoanalytic functions of Bers \cite{Bers_pseudoanalytic}. The 2D solutions have already played a key part in various important results, such as the solution to the Calder\'on problem in \cite{Astala-Paivarinta}.

Relying on powerful 2D existence theorems and the ideas presented in \cite[Section III]{Vekua_generalized-analytic}, Astala and P\"aiv\"arinta  gave a Liouville-type theorem for entire planar solutions of \eqref{eq:qrvalue0in2D} that vanish at infinity, under the assumption that $A$ is compactly supported; see \cite[Proposition 3.3 a)]{Astala-Paivarinta}. Astala, Iwaniec and Martin then gave a version of this result for non-compactly supported $A$ in \cite[Theorem 8.5.1]{Astala-Iwaniec-Martin_Book}, and conjectured that the same holds in higher dimensions for solutions of \eqref{eq:qrvalue0}. This conjecture was recently resolved by the authors in \cite{Kangasniemi-Onninen_Heterogeneous}. In our proof of the conjecture, we referred to the generalized distortion inequality~\eqref{eq:qrvalue0} as a ``heterogeneous distortion inequality''. However, based on the results we show in this paper, the following term is more appropriate.

\begin{defn}\label{def:poinwiseqr}
	Let $\Omega \subset \R^n$ be a  domain, let $y_0 \in \R^n$, let $K \geq 1$, and let $\qrval \in L^{1+\eps}_\loc(\Omega)$ for some $\eps > 0$. Suppose that $f \in W^{1,n}_\loc(\Omega, \R^n)$. Then we say that $f$ has a \emph{$(K, \qrval)$-quasiregular value at $y_0$} if
	\begin{equation}\label{eq:qrvalue}
		\abs{Df(x)}^n \leq K J_f(x) + \qrval(x) \abs{f(x) - y_0}^n
	\end{equation}
	for a.e.\ $x \in \Omega$.
\end{defn}

Note that we assume a tiny amount of higher integrability of $\qrval$. It was shown in \cite[Theorem 1.1]{Kangasniemi-Onninen_Heterogeneous} that if $f$ has a $(K, \qrval)$-quasiregular value, then this higher integrability of $\qrval$ implies that $f$ is locally H\"older continuous in $\Omega$. Heuristically, maps $f$ satisfying \eqref{eq:qrvalue} are restricted similarly to quasiregular maps when $f(x)$ is close to $y_0$, but may behave more like arbitrary $W^{1, n+n\eps}_\loc(\Omega, \R^n)$-maps when $f(x)$ is away from $y_0$. It is also noteworthy that \eqref{eq:qrvalue} still allows for $\Omega$ to have regions where $J_f$ is negative, or regions where $J_f \equiv 0$ while $Df \neq 0$.

Our main motivation for this term is the following theorem, which is the main result of this paper.
\begin{thm}\label{thm:one_point_Reshetnyak}
	Let $\Omega \subset \R^n$ be a  domain, and let $f \in W^{1,n}_\loc(\Omega, \R^n)$. Suppose that $f$ has a $(K, \qrval)$-quasiregular value at $y_0 \in \R^n$. Then either $f \equiv  y_0$ or the following conditions hold:
	\begin{itemize}
		\item $f^{-1} \{y_0\}$ is a discrete subset of $\Omega$,
		\item at every $x_0 \in f^{-1} \{y_0\}$ the local index $i(x_0, f)$ is positive, and
		\item for every neighborhood $U$ of a point $x_0 \in f^{-1} \{y_0\}$, we have $y_0 \in \intr f(U)$.
	\end{itemize}
\end{thm}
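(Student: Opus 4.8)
The plan is to reduce the single-value statement to the classical Reshetnyak theorem by a localization and exhaustion argument. The key point is that near a zero $x_0$ of $f - y_0$, the term $\qrval(x)\abs{f(x) - y_0}^n$ is small, so $f$ behaves almost like a genuine quasiregular map there; we must quantify this and extract the three topological conclusions. First I would show that $f^{-1}\{y_0\}$ has empty interior unless $f \equiv y_0$: if $f = y_0$ on an open set, then by \eqref{eq:qrvalue} we get $Df = 0$ on that set, and a standard unique-continuation / connectedness argument (using that $f \in W^{1,n}_\loc$ and $\Omega$ is connected, together with the higher integrability from \cite[Theorem 1.1]{Kangasniemi-Onninen_Heterogeneous} giving continuity) forces $f \equiv y_0$ on all of $\Omega$. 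So from now on assume $f \not\equiv y_0$.

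The main step is a local winding-number / degree computation. Fix $x_0 \in f^{-1}\{y_0\}$ and a small ball $B = B(x_0, r) \Subset \Omega$ on whose boundary $f \neq y_0$ (such $r$ exist for a.e.\ small radius since $f$ is continuous and $f\not\equiv y_0$ locally — this needs the no-interior step above applied on $B$). The degree $\deg(f, B, y_0)$ is then well-defined. I would like to prove this degree is \emph{positive}, which simultaneously gives sense-preservation and, via degree theory, that $y_0 \in \intr f(U)$ for every neighborhood $U$. The natural route is to compare $f$ with a nearby honest quasiregular map. Here is where I would exploit the structure of \eqref{eq:qrvalue}: consider the homotopy $f_t = y_0 + t(f - y_0)$, or better, a Stoilow-type factorization argument. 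Actually the cleanest approach is probably the one used in \cite{Kangasniemi-Onninen_Heterogeneous} for the Liouville theorem: rescale around $x_0$, i.e.\ set $f_\lambda(x) = \lambda^{-\alpha}(f(x_0 + \lambda x) - y_0)$ for a suitable exponent $\alpha$ tied to the Hölder modulus, and show that as $\lambda \to 0$ the rescalings subconverge (in $W^{1,n}$ and locally uniformly, using the higher integrability and a Caccioppoli estimate coming from \eqref{eq:qrvalue}) to a non-constant $K$-quasiregular map $g$ with $g(0) = 0$; the $\qrval$-term disappears in the limit because $\qrval \in L^{1+\eps}_\loc$ scales away against $\abs{f - y_0}^n \to 0$. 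By Reshetnyak's theorem $g$ is open, discrete and sense-preserving, so $\deg(g, B(0,\rho), 0) = i(0,g) > 0$ for small $\rho$; and degree stability under the convergence $f_\lambda \to g$ transfers positivity of the degree to $f$ near $x_0$.

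From a positive local degree the three conclusions follow by standard degree theory: positivity of $\deg(f, B, y_0)$ for all small $B$ around $x_0$ gives $i(x_0, f) > 0$ once we know discreteness, and it immediately gives $y_0 \in \intr f(U)$ by the solvability property of the degree. For \emph{discreteness} of $f^{-1}\{y_0\}$, suppose $x_0$ is a non-isolated zero; take $r$ with $\deg(f, B(x_0,r), y_0) > 0$ defined, and note there are zeros arbitrarily close to $x_0$. One then runs an argument showing the zero set cannot accumulate: either via a monotonicity of the index under the rescaling limit (the limit $g$ being discrete forces the pre-rescaled map to have isolated zeros on a definite scale), or via a direct modulus-of-continuity estimate from \eqref{eq:qrvalue} showing $\abs{f(x) - y_0} \geq c\abs{x - x_0}^\beta$ near any zero of positive index — this lower bound would preclude accumulation. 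I expect the \textbf{main obstacle} to be making the rescaling/compactness argument fully rigorous: ensuring the rescaled maps are non-degenerate in the limit (i.e.\ the limit is genuinely non-constant, which is where one needs a quantitative lower oscillation bound, not just the Hölder upper bound) and that the limiting distortion inequality is exactly the clean quasiregularity inequality \eqref{eq:qr-def} with the same $K$. Controlling the $\qrval$-term uniformly under rescaling, given only $L^{1+\eps}_\loc$ integrability, is the delicate analytic point and is presumably why the hypothesis includes that extra $\eps$ of integrability.
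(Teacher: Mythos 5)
Your proposal takes a genuinely different route from the paper (blow-up around $x_0$ to a classical quasiregular map, then invoke Reshetnyak's theorem and degree stability), but as written it has several unresolved gaps, two of which are serious enough that the argument does not go through.

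First, your setup already needs more than you prove. To define $\deg(f,B(x_0,r),y_0)$ you need a sphere $\partial B(x_0,r)$ disjoint from $f^{-1}\{y_0\}$, and ``$f\not\equiv y_0$ locally'' (empty interior of the zero set) does not give this: a priori $f^{-1}\{y_0\}$ could contain a nondegenerate continuum surrounding $x_0$ or meeting every sphere around it. What is actually needed is that $f^{-1}\{y_0\}$ is totally disconnected (the paper proves the stronger $\cH^1(f^{-1}\{y_0\})=0$ via a $\log\log\abs{f-y_0}^{-1}$ capacity estimate, Lemmas \ref{lem:trunc_loglog_bound}--\ref{lem:fiber_hausd_measure}), and even then one must work with components of $f^{-1}\B^n(y_0,\eps)$ rather than round balls (Lemma \ref{lem:compact_preimages}). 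Relatedly, your ``standard unique continuation'' for the dichotomy $f\equiv y_0$ versus $f\not\equiv y_0$ is not standard here: $Df=0$ a.e.\ on the zero set is automatic for Sobolev maps and propagates nothing; the set where $f\equiv y_0$ on a neighborhood is open but not obviously closed in $\Omega$. The paper extracts the dichotomy from the same capacity lemma.

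Second, and more fundamentally, the non-degeneracy of the blow-up limit --- which you correctly flag as the main obstacle --- is not a technicality but the crux, and no mechanism for resolving it is offered. A lower oscillation bound of the form $\abs{f(x)-y_0}\geq c\abs{x-x_0}^\beta$ is not available before the theorem is proved (for genuine quasiregular maps it is a \emph{consequence} of openness and discreteness), and the rescaled sequence $f_\lambda$ may well converge to a constant for every choice of $\alpha$, in which case Reshetnyak's theorem gives nothing and degree stability fails. The paper avoids blow-up entirely: it proves $\deg(f,U)\geq 0$ for small components $U$ directly from the Jacobian degree formula and the pointwise bound $J_f^-\leq K^{-1}\abs{f-y_0}^n\qrval$ (Lemma \ref{lem:weakly_sense_preserving}), and then rules out $\deg(f,U)=0$ by showing this would force $\log\abs{f-y_0}$ to be locally H\"older continuous on $U$ (Lemmas \ref{lem:sense_preserving_level_set_integral}--\ref{lem:sense_preserving}, a localized version of the Liouville argument of \cite{Kangasniemi-Onninen_Heterogeneous}), contradicting $\log\abs{f(x)-y_0}\to-\infty$ at $x_0$. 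Finally, your discreteness step is only gestured at; the paper derives it from positivity of the degree by a strictly decreasing chain of positive integer degrees over nested components (Lemma \ref{lem:discreteness}), which is the kind of concrete argument your sketch would still need.
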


That is, the condition \eqref{eq:qrvalue} with $\qrval \in L^{1+\eps}_\loc(\Omega)$ implies Reshetnyak's theorem at the pre-images of the single point $y_0$. To our knowledge, this is the first point-wise version of Reshetnyak's theorem in the quasiregular literature. Note that the assumption of higher integrability of $\qrval$ is mandatory; see Example \ref{ex:higher_int_needed}. We also emphasize that maps satisfying \eqref{eq:qrvalue} are not necessarily locally quasiregular in a neighborhood of $f^{-1} \{y_0\}$; see example \ref{ex:not_local_qr}.

Our choice of terminology suggests a connection between quasiregular maps and maps that have a quasiregular value at every $y_0 \in \R^n$. This is provided by the following theorem.

\begin{thm}\label{thm:equivalence}
	Let $\Omega \subset \R^n$ be a domain, let $f \in W^{1,n}_\loc(\Omega, \R^n)$, and let $K \geq 1$. Then the following are equivalent.
	\begin{itemize}
		\item The map $f$ is $K$-quasiregular.
		\item For every $y_0 \in \R^n$, there exists $\qrval_{y_0} \in L^{1+\eps}_\loc(\Omega)$ such that $f$ has a $(K, \qrval_{y_0})$-quasiregular value at $y_0$.
	\end{itemize}
\end{thm}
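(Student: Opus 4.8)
The implication from $K$-quasiregularity to the second condition is trivial: \eqref{eq:qrvalue} holds for every $y_0$ with $\qrval_{y_0}\equiv 0$. For the converse the plan is to show directly that the hypothesis forces $\abs{Df(x)}^n\le KJ_f(x)$ for a.e.\ $x\in\Omega$. Set $G(x):=\max\{0,\,\abs{Df(x)}^n-KJ_f(x)\}\ge 0$; since $\min\{G,1\}$ vanishes a.e.\ if and only if $G$ does, we may replace $G$ by $\min\{G,1\}$ and so assume $G$ is bounded, and it suffices to prove $G=0$ a.e. Fix $y_0\in\R^n$; replacing $\qrval_{y_0}$ by $\max\{\qrval_{y_0},0\}$ (still in $L^{1+\eps}_\loc(\Omega)$ and still admissible in \eqref{eq:qrvalue}) we may assume $\qrval_{y_0}\ge 0$. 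Then \eqref{eq:qrvalue} gives $G(x)\le\qrval_{y_0}(x)\abs{f(x)-y_0}^n$ for a.e.\ $x$, and at a.e.\ $x\in f^{-1}\{y_0\}$ the estimate \eqref{eq:qrvalue} already forces $G(x)=0$; hence $G(x)^{1+\eps}\abs{f(x)-y_0}^{-n(1+\eps)}\le\qrval_{y_0}(x)^{1+\eps}$ for a.e.\ $x$ (with the left side read as $0$ where $G(x)=0$). Consequently, for every ball $B\Subset\Omega$,
\[
	\int_{B}\frac{G(x)^{1+\eps}}{\abs{f(x)-y_0}^{n(1+\eps)}}\,\dd x\ \le\ \int_{B}\qrval_{y_0}(x)^{1+\eps}\,\dd x\ <\ \infty .
\]

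Now fix a ball $B\Subset\Omega$ and let $\nu$ be the push-forward under $f$ of the finite measure with density $G^{1+\eps}$ on $B$; that is, $\nu$ is the finite Borel measure on $\R^n$ with $\nu(A)=\int_{B\cap f^{-1}(A)}G(x)^{1+\eps}\,\dd x$. By the change-of-variables formula for push-forwards, the displayed estimate above says precisely that the Riesz-type potential
\[
	U_\nu(y_0)\ :=\ \int_{\R^n}\frac{\dd\nu(w)}{\abs{w-y_0}^{n(1+\eps)}}
\]
is finite \emph{for every} $y_0\in\R^n$. The analytic core of the argument is then the following elementary fact: if $\mu$ is a nonzero finite Borel measure on $\R^n$ and $s>n$, then $\int_{\R^n}\abs{w-y}^{-s}\,\dd\mu(w)=\infty$ for $\mu$-almost every $y$, hence for at least one $y$. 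Indeed, by the standard density estimates for Radon measures on $\R^n$ there is a Borel set $E$ with $\mu(E)>0$ on which $\limsup_{\rho\to 0}\rho^{-n}\mu\bigl(B(y,\rho)\bigr)>0$; for such a $y$, choosing $\rho_j\downarrow 0$ with $\mu\bigl(B(y,\rho_j)\bigr)\ge c\,\rho_j^{n}$ gives $\int_{\R^n}\abs{w-y}^{-s}\,\dd\mu(w)\ge\rho_j^{-s}\mu\bigl(B(y,\rho_j)\bigr)\ge c\,\rho_j^{n-s}\to\infty$ as $j\to\infty$, since $n-s<0$. Applying this with $\mu=\nu$ and $s=n(1+\eps)>n$ forces $\nu=0$, i.e.\ $\int_{B}G^{1+\eps}\,\dd x=0$; thus $G=0$ a.e.\ on $B$, and since $B\Subset\Omega$ was arbitrary, $\abs{Df}^n\le KJ_f$ a.e.\ on $\Omega$, i.e.\ $f$ is $K$-quasiregular.

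The one real obstacle is that the functions $\qrval_{y_0}$ may depend on $y_0$ with no uniform control, so one can neither let $y_0\to f(x)$ pointwise in \eqref{eq:qrvalue} nor simply integrate the displayed bound over $y_0$ (that integral is typically $+\infty$ even though every $y_0$-slice is finite). Passing to the potential $U_\nu$ upgrades the hypothesis to the qualitatively stronger statement that $U_\nu$ is finite \emph{at every point} rather than merely a.e.\ or in an integrated sense, and it is this, together with the exponent $n(1+\eps)$ strictly exceeding the dimension, that is incompatible with $\nu\neq 0$; this is exactly the step where the higher integrability $\eps>0$ is used. I note that the argument requires only the measurability of $f$: it uses neither the continuity of $f$ nor the single-point Reshetnyak theorem (Theorem~\ref{thm:one_point_Reshetnyak}), although it is the latter that makes Definition~\ref{def:poinwiseqr} a natural pointwise notion of quasiregularity.
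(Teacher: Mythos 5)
Your proof is correct, and it takes a genuinely different route from the paper's. The paper first applies Lemma \ref{lem:higher_integrability} (a Gehring-lemma argument) for a single $y_0$ to get $f\in W^{1,p'}_\loc$ with $p'>n$, hence almost-everywhere differentiability; it then fixes a Lebesgue point $x_0$ of $\abs{Df}^n$ and $J_f$ at which $f$ is differentiable, applies \eqref{eq:qrvalue} with $y_0=f(x_0)$, and averages over shrinking balls $B_r$ --- differentiability gives $\abs{f(x)-f(x_0)}^n\lesssim\abs{Df(x_0)}^n r^n$ on $B_r$, so the $\qrval_{f(x_0)}$-term dies in the limit using only $L^1_\loc$-integrability. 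Your push-forward/upper-density/Riesz-potential argument bypasses all of this: it needs no differentiation theory and no higher integrability of $Df$, and each step (truncating $G$, the pointwise bound $G^{1+\eps}\abs{f-y_0}^{-n(1+\eps)}\le\qrval_{y_0}^{1+\eps}$ together with $G=0$ a.e.\ on $f^{-1}\{y_0\}$, the everywhere-finiteness of $U_\nu$, and the incompatibility of a finite $s$-potential with $s>n$ at a point of positive upper $n$-density) is sound. The trade-off is where higher integrability is spent: the paper uses $\qrval_{y_0}\in L^{1+\eps}_\loc$ for one $y_0$ only and mere $L^1_\loc$ for all others, whereas you use the exponent $1+\eps$ for \emph{every} $y_0$, with the \emph{same} $\eps$, since that $\eps$ also fixes the density of $\nu$. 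This matches the theorem as literally written, but Definition \ref{def:poinwiseqr} quantifies $\eps$ existentially, so if one allows $\eps_{y_0}\to 0$ as $y_0$ varies your argument needs a small patch (e.g.\ split $\{G>0\}$ into the sets $\{G>1/m\}$, on which $G^{1+\delta}\ge m^{-\delta}G$ for $\delta\le 1$, so a point of positive upper density for $f_*\bigl(\mathbf{1}_{\{G>1/m\}}G\,\dd x\bigr)$ remains one after changing the exponent to $1+\eps_y$ for the particular $y$ found); the paper's proof covers that case as stated.
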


Recall that the \emph{Liouville theorem} for quasiregular maps asserts that a bounded quasiregular mapping in $\R^n$ is constant~\cite{Reshetnyak_Liouville}.
To further motivate our terminology, we restate \cite[Theorem 1.2]{Kangasniemi-Onninen_Heterogeneous} in a way which clearly shows that it is indeed a single-value version of the Liouville theorem. 
\begin{thm}[{\cite[Theorem 1.2]{Kangasniemi-Onninen_Heterogeneous}}]\label{thm:Liouville_restated}
	Let $f \in W^{1,n}_\loc(\R^n, \R^n)$, and suppose that $f$ is bounded. If $f$ has a $(K, \qrval)$-quasiregular value at $y_0 \in \R^n$ where $\qrval \in L^{1}(\R^n) \cap L^{1+\eps}_\loc(\R^n)$, then either $f \equiv y_0$ or $y_0 \notin f(\R^n)$.
\end{thm}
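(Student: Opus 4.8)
The plan is to reduce, then extend $f$ to a self-map of the sphere, and finally run a degree argument. After replacing $f$ by $f-y_0$ and $\qrval$ by $\max\{\qrval,0\}$ — neither change affects $Df$, $J_f$, the validity of \eqref{eq:qrvalue}, or the integrability hypotheses — I may assume $y_0=0$ and $\qrval\ge 0$. Write $L:=\sup_{\R^n}|f|<\infty$. Suppose $f\not\equiv 0$; I must show $0\notin f(\R^n)$, so I assume for a contradiction that $f(x_0)=0$ for some $x_0$, noting that $f$ is then non-constant. Two preliminary inputs: by Theorem~\ref{thm:one_point_Reshetnyak}, $f^{-1}\{0\}$ is discrete and $i(x,f)>0$ at each of its points; and from \eqref{eq:qrvalue} we get $J_f\ge -K^{-1}\qrval|f|^n\ge -K^{-1}L^n\qrval$ a.e., so the negative part of $J_f$ has globally finite mass, $\int_B J_f^-\le K^{-1}L^n\|\qrval\|_{L^1(\R^n)}$ for every ball $B$, and, by dominated convergence (since $f^{-1}\{0\}$ is Lebesgue-null), $\int_{f^{-1}(B(0,s))}\qrval\to 0$ as $s\to 0^+$. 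These two facts are what let the non-compactly-supported $\qrval$ behave, near $f^{-1}\{0\}$ and at infinity, like a compactly supported one.

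The central step — and, I expect, the hardest — is to show that $\lim_{x\to\infty}f(x)$ exists, so that $f$ extends to a continuous map $\widehat f\colon S^n\to S^n$ of the one-point compactification, with $\widehat f(\infty)=:y_\infty\in\overline{B(0,L)}$. The mechanism is that on a far-out annulus $A_R=\{R\le|x|\le 2R\}$ the tail $\int_{A_R}\qrval$ is small, so \eqref{eq:qrvalue} is there a small perturbation of the quasiregularity inequality \eqref{eq:qr-def}; I would combine this with the boundedness of $f$ and with the self-improving higher integrability of $Df$ afforded by $\qrval\in L^{1+\eps}_\loc$ (a Gehring-type argument), via a Caccioppoli estimate and a Sobolev--Poincar\'e and scaling iteration over the annuli $A_R$, to obtain $\osc_{\{|x|\ge R\}}f\to 0$ as $R\to\infty$, and along the way $\int_{\R^n}|Df|^n<\infty$. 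This is the higher-dimensional substitute for the classical statement that a bounded entire quasiregular map is constant, and making it quantitative with $\qrval$ only in $L^1\cap L^{1+\eps}_\loc$ is the main technical obstacle.

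With $\widehat f$ in hand, I exploit that $|f|\le L<\infty$ forces the image $\widehat f(S^n)$ into the closed ball $\overline{B(0,L)}$, which is not all of $S^n$; hence $\widehat f$ is null-homotopic and $\deg(\widehat f,S^n,0)=0$. Suppose first $y_\infty\neq 0$. Then $|f|\ge|y_\infty|/2$ outside some $B(0,R_1)$, so $f^{-1}\{0\}\subset B(0,R_1)$ is finite, say $f^{-1}\{0\}=\{x_0,x_1,\dots,x_k\}$, and $\widehat f^{-1}(0)=f^{-1}\{0\}$. Choosing pairwise disjoint balls $U_j\ni x_j$ in $B(0,R_1)$ with $\overline{U_j}\cap f^{-1}\{0\}=\{x_j\}$, excision and additivity of degree give
\[
 0=\deg(\widehat f,S^n,0)=\sum_{j=0}^{k}\deg(f,U_j,0)=\sum_{j=0}^{k} i(x_j,f)\ \ge\ k+1\ \ge\ 1,
\]
a contradiction. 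This proves the theorem when $y_\infty\neq 0$.

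The remaining case $y_\infty=0$ — that is, $f\to 0$ at infinity — is delicate, since then $\infty\in\widehat f^{-1}(0)$ and the identity $\deg(\widehat f,S^n,0)=0$ only yields $i(\infty,\widehat f)=-\sum_{j} i(x_j,f)\le -1$ for the local index at $\infty$, so I must also show that $\widehat f$ is sense-preserving at $\infty$. To close it I would argue separately: either transport \eqref{eq:qrvalue} through an orientation-preserving M\"obius chart centred at $\infty$ (using that, by the finiteness of the $n$-energy from the central step, the puncture is $W^{1,n}$-removable) and verify — with care about the integrability class of the transported weight, where the smallness $\int_{\{|y|>R\}}\qrval\to 0$ is again essential — that Theorem~\ref{thm:one_point_Reshetnyak} still applies there and forces $i(\infty,\widehat f)>0$; or run a direct $n$-capacity estimate between a small sphere about $x_0$ and infinity, using $f\to 0$ at infinity together with the finite mass of $J_f^-$. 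Once this last case is disposed of, we obtain a contradiction in every situation, and hence $y_0\notin f(\R^n)$.
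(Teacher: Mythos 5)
There is a genuine gap, and it sits exactly where you predicted: the ``central step''. The claim that boundedness plus a $(K,\qrval)$-quasiregular value at $0$ forces $\lim_{x\to\infty}f(x)$ to exist is not only unproven in your sketch, it is false as a general statement. Take $f(x)=\bigl(2+\sin(\log\log\abs{x}),0,\dots,0\bigr)$ for $\abs{x}\geq e$, extended smoothly and boundedly away from $0$ on $\B^n(0,e)$. Then $J_f\equiv 0$, $\abs{f}\in[1,3]$, and $\abs{Df(x)}\lesssim \abs{x}^{-1}\log^{-1}\abs{x}$, so $\qrval:=\abs{Df}^n/\abs{f}^n$ lies in $L^1(\R^n)\cap L^\infty_\loc(\R^n)$ and $f$ has a $(K,\qrval)$-quasiregular value at $0$ for any $K\geq 1$; yet $f$ oscillates at infinity and has no limit. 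Your proposed mechanism (smallness of $\int_{A_R}\qrval$ plus Caccioppoli/Gehring iteration over annuli) never uses the hypothesis $f^{-1}\{0\}\neq\emptyset$, so it would ``prove'' this false statement; hence it cannot yield the oscillation decay, and the whole compactification-plus-degree argument built on $\widehat f\colon S^n\to S^n$ collapses. In addition, the residual case $y_\infty=0$ is only gestured at with two alternative strategies, neither carried out, and each has real obstacles (integrability class of the M\"obius-transported weight, $W^{1,n}$-removability of the puncture, sense-preservation at $\infty$), so even granting the central step the proof would not be complete.

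For comparison, the paper does not reprove this theorem here (it is quoted from the earlier paper of the authors), and the method there — of which Section 4 of the present paper is a localized version — avoids any claim about behaviour of $f$ at infinity. One assumes $y_0\in f(\R^n)$ and $f\not\equiv y_0$, uses the boundedness of $f$ together with $\qrval\in L^1(\R^n)$ to derive the global identities $\int \abs{Df}^n/\abs{f-y_0}^n<\infty$ and $\int J_f/\abs{f-y_0}^n=0$, concludes $\log\abs{f-y_0}\in W^{1,n}_\loc$ with a decay estimate giving local H\"older continuity of $\log\abs{f-y_0}$, and contradicts the blow-up $\log\abs{f(x)-y_0}\to-\infty$ at a point of $f^{-1}\{y_0\}$. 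If you want to salvage a degree-theoretic route, the information ``degree vanishes in the large'' must be extracted from these integral identities (as in Lemma \ref{lem:sense_preserving_level_set_integral} and its sequel), not from a pointwise limit of $f$ at infinity.
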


Moreover, we recall that if $f \colon \Omega \to \R^n$ is $K$-quasiregular with $K < 1$, then $f$ is necessarily constant. This result too has a single-valued counterpart. 

\begin{thm}\label{thm:small_K}
	Let $\Omega \subset \R^n$ be a  domain, and let $f \in W^{1,n}_\loc(\Omega, \R^n)$. Suppose that $f$ has a $(K, \qrval)$-quasiregular value at $y_0 \in \R^n$, where instead of $K \geq 1$ we assume that $0 \leq K < 1$. Then either $f \equiv y_0$ or $y_0 \notin f(\Omega)$.
\end{thm}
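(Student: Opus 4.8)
The plan is to convert the hypothesis \eqref{eq:qrvalue} (with $0\le K<1$) into a pointwise differential inequality for $g:=\abs{f-y_0}$, and then to rule out zeros of $g$ by a logarithmic capacity estimate. Replacing $\qrval$ by $\max(\qrval,0)$ we may assume $\qrval\ge0$. Since $J_f(x)=\det Df(x)\le\abs{Df(x)}^n$ by Hadamard's inequality, \eqref{eq:qrvalue} yields $(1-K)\abs{Df(x)}^n\le\qrval(x)\abs{f(x)-y_0}^n$ for a.e.\ $x$, that is $\abs{Df(x)}\le c(x)\abs{f(x)-y_0}$ a.e., where $c:=(\qrval/(1-K))^{1/n}$ and $c^n\in L^{1+\eps}_\loc(\Omega)$. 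As $y\mapsto\abs{y-y_0}$ is $1$-Lipschitz, $g\in W^{1,n}_\loc(\Omega)$ and $\abs{\nabla g}\le\abs{Df}\le c\,g$ a.e. Also, using $J_f\ge-\abs{Df}^n$ we obtain $\abs{Df}^n\le J_f+2\abs{Df}^n\le J_f+\tilde\qrval\abs{f-y_0}^n$ with $\tilde\qrval:=2\qrval/(1-K)\in L^{1+\eps}_\loc(\Omega)$, so $f$ has a $(1,\tilde\qrval)$-quasiregular value at $y_0$ in the sense of Definition~\ref{def:poinwiseqr}.

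Assume now $f\not\equiv y_0$; we must show $y_0\notin f(\Omega)$. Applying Theorem~\ref{thm:one_point_Reshetnyak} to the $(1,\tilde\qrval)$-quasiregular value just obtained, the set $E:=f^{-1}\{y_0\}=\{g=0\}$ is a discrete closed subset of $\Omega$, and by \cite[Theorem~1.1]{Kangasniemi-Onninen_Heterogeneous} the map $f$, hence $g$, is locally H\"older continuous. Suppose toward a contradiction that $E\neq\emptyset$, and fix $x_0\in E$. By discreteness choose $r_0>0$ with $\overline{B(x_0,r_0)}\subset\Omega$ and $E\cap\overline{B(x_0,r_0)}=\{x_0\}$, and by local H\"older continuity at $x_0$ choose $L>0$ and $\alpha\in(0,1]$ with $g(x)\le L\abs{x-x_0}^{\alpha}$ for $x$ near $x_0$. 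For $0<s<r_0$ put $A_s:=B(x_0,r_0)\setminus\overline{B(x_0,s)}$. Then $g$ is continuous and strictly positive on $\overline{A_s}$, so $v:=\log g$ lies in $W^{1,n}(A_s)$ with $\abs{\nabla v}=\abs{\nabla g}/g\le c$ a.e., whence $\int_{A_s}\abs{\nabla v}^n\le\int_{B(x_0,r_0)}c^n=:M<\infty$.

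Let $\tau_1:=\min_{\partial B(x_0,r_0)}v\in\R$ and $\tau_0(s):=\max_{\partial B(x_0,s)}v$, so $\tau_0(s)\le\log(Ls^{\alpha})\to-\infty$ as $s\to0^+$. For $s$ so small that $\tau_0(s)<\tau_1$, the standard truncation $w:=\bigl(\min(\max(v,\tau_0(s)),\tau_1)-\tau_0(s)\bigr)/(\tau_1-\tau_0(s))$ vanishes on $\partial B(x_0,s)$ and equals $1$ on $\partial B(x_0,r_0)$, hence is admissible for the spherical ring $A_s$, whose $n$-capacity equals $\omega_{n-1}\bigl(\log(r_0/s)\bigr)^{1-n}$ with $\omega_{n-1}$ the surface measure of the unit sphere. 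Therefore
\begin{equation*}
	\omega_{n-1}\bigl(\log(r_0/s)\bigr)^{1-n}\le\int_{A_s}\abs{\nabla w}^n\le\frac{M}{(\tau_1-\tau_0(s))^{n}}.
\end{equation*}
Since $\tau_1-\tau_0(s)\ge\tau_1-\log L-\alpha\log s$ grows like $\alpha\log(1/s)$ while $\log(r_0/s)$ grows like $\log(1/s)$, rearranging gives $M\ge\omega_{n-1}\alpha^n\log(1/s)\,(1+o(1))\to\infty$, a contradiction. Hence $E=\emptyset$, i.e.\ $y_0\notin f(\Omega)$.

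The main obstacle is the last step: controlling the logarithmic blow-up of $v=\log g$ at the isolated point $x_0\in E$. This is exactly where the higher integrability $\qrval\in L^{1+\eps}_\loc$ is genuinely used (beyond the appeal to Theorem~\ref{thm:one_point_Reshetnyak}): with merely $c^n\in L^1_\loc$ one could have, for instance, $g(x)\sim1/\log(1/\abs{x-x_0})$ near $x_0$ (cf.\ Example~\ref{ex:higher_int_needed}), for which the capacity estimate above is vacuous. It is the H\"older decay of $g$, itself a consequence of that integrability via \cite[Theorem~1.1]{Kangasniemi-Onninen_Heterogeneous}, that forces the right-hand side of the displayed inequality to $0$ fast enough to close the argument.
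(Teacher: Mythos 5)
Your proof is correct, but it takes a genuinely different route from the paper's. The paper's argument is short and self-contained: from $J_f\le\abs{Df}^n$ it gets $\abs{Df}^n\le(1-K)^{-1}\qrval\,\abs{f-y_0}^n$ exactly as you do, but then sets $u=\max(0,\log\abs{f-y_0}^{-2})$ and observes that the truncations $u_k=\min(u,k)$ satisfy $\abs{\nabla u_k}^n\le\qrval/(1-K)$ pointwise, so that $\nabla u_k$ is bounded in $L^{n(1+\eps)}_\loc$ uniformly in $k$; Lemma~\ref{lem:loglog_sobolev} then puts $u$ in $W^{1,p}_\loc(\Omega)$ with $p>n$, and Morrey--Sobolev embedding makes $u$ locally bounded, which is incompatible with $u\to\infty$ at any zero of $\abs{f-y_0}$. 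Thus the paper uses the higher integrability of $\qrval$ \emph{directly} to push $\log\abs{f-y_0}^{-1}$ into a supercritical Sobolev space, and never needs Theorem~\ref{thm:one_point_Reshetnyak} or the H\"older continuity of $f$. You instead observe (correctly, via $J_f\ge-\abs{Df}^n$) that $f$ also has a $(1,\tilde\qrval)$-quasiregular value, invoke Theorem~\ref{thm:one_point_Reshetnyak} to isolate each zero of $g=\abs{f-y_0}$, import the H\"older decay of $g$ from \cite[Theorem~1.1]{Kangasniemi-Onninen_Heterogeneous}, and then run the classical ring-capacity estimate against the bound $\int\abs{\nabla\log g}^n\le\int\qrval/(1-K)$, which only needs $\qrval\in L^1_\loc$. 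Both arguments are sound and both ultimately consume the $L^{1+\eps}_\loc$ hypothesis, just in different places; yours is heavier machinery-wise (it sits downstream of the whole proof of Theorem~\ref{thm:one_point_Reshetnyak}, which is needed genuinely, since without discreteness the zero set could be perfect and no annulus free of zeros would exist), while the paper's is essentially a three-line truncation argument. Your closing discussion of why mere $L^1_\loc$ integrability would let $g$ decay like $1/\log(1/\abs{x-x_0})$ and defeat the capacity estimate is accurate and consistent with Example~\ref{ex:higher_int_needed}. The only points worth tightening are routine: the admissibility of your truncated $w$ for the ring condenser requires extending $1-w$ by $1$ inside $\B^n(x_0,s)$ and by $0$ outside $\B^n(x_0,r_0)$ (legitimate since $w$ is continuous on $\overline{A_s}$ with the stated boundary values), and one should note that $g>0$ on the compact set $\overline{A_s}$ is what makes $v=\log g$ an honest $W^{1,n}$ function there.
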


We note that if $f \colon \Omega \to \R^n$ satisfies \eqref{eq:qrvalue} with $K$, $\qrval$, and $y_0$, and if $\iota \colon \R^n \to \R^n$ is a reflection along one coordinate axis, then $\iota \circ f$ satisfies \eqref{eq:qrvalue} with $-K$, $\qrval$, and $\iota(y_0)$. Hence, using negative values of $K$ in Definition \ref{def:poinwiseqr} would lead to orientation-reversing versions of Theorems \ref{thm:one_point_Reshetnyak}--\ref{thm:small_K}.

It is also illuminating to consider what the condition \eqref{eq:qrvalue} looks like for the most natural choice of $\qrval \in L^{1+\eps}_\loc(\Omega)$ with $\eps >0$. That is, we select a point $x_0 \in f^{-1} \{y_0\}$, and define $\qrval(x) = C \abs{x-x_0}^{-q}$ for some $C > 0$ and $q < n$. This leads to \eqref{eq:qrvalue} taking the form
\[
	\abs{Df(x)}^n \leq K J_f(x) + C \left( \frac{\abs{f(x) - f(x_0)}}{\abs{x - x_0}^\gamma} \right)^n, \quad \text{where }\gamma \in (0, 1).
\]

\subsection{Argument principle}

In the solution of the Calder\'on problem by Astala and P\"aiv\"arinta in~\cite{Astala-Paivarinta}, another key tool besides the Liouville theorem is a version of the argument principle for solutions of \eqref{eq:qrvalue0in2D}. Using the terminology we have introduced, the statement they show in \cite[Proposition~3.3~b)]{Astala-Paivarinta} is as follows: \emph{Let $f \in W^{1,p}_\loc(\C, \C)$ for some $p > 2$ be such that $f$ has a $(K, \qrval)$-quasiregular value at $0$. Suppose that there exists $\lambda \in \C \setminus \{0\}$ such that 
\begin{equation}\label{eq:identityatinfinity}
	\frac{\abs{f(z) - \lambda z}} { \abs{z} }\to 0, \quad  \textnormal{ as } z \to \infty .
\end{equation}
Then $f(z) = 0$ at exactly one point $z \in \C$.}

The proofs of this result in \cite{Astala-Paivarinta} and in \cite[Section 18.5]{Astala-Iwaniec-Martin_Book} rely heavily on arguments that are specific to two dimensions. Nevertheless, by combining Theorem \ref{thm:one_point_Reshetnyak} with a standard degree theory argument, we immediately  obtain a higher dimensional version of this result with far more general assumptions.

\begin{cor}\label{cor:argument_principle}
	Let $f_1, f_2 \in W^{1,n}_\loc(\R^n, \R^n)$ be such that both $f_i$ have a $(K_i, \qrval_i)$-quasiregular value at $y_0 \in \R^n$. Suppose that
	\[
		\liminf_{x \to \infty} \abs{f_2(x) - y_0} \neq 0 
		\quad \text{and} \quad
		\lim_{x \to \infty} \abs{f_1(x) - f_2(x)} = 0.
	\]
	Then
	\[
		\sum_{x \in f_1^{-1}\{y_0\}} i(x, f_1) = \sum_{x \in f_2^{-1}\{y_0\}} i(x, f_2).
	\]
	In particular, if $f_2^{-1}\{y_0\}$ is a singleton, then $f_1^{-1}\{y_0\}$ is also a singleton.
\end{cor}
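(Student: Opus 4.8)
The plan is to reduce the identity to homotopy invariance of the topological degree, using Theorem~\ref{thm:one_point_Reshetnyak} to supply all the topological input about $f_i^{-1}\{y_0\}$. First I would dispose of the degenerate cases. If $f_2\equiv y_0$ then $\liminf_{x\to\infty}\abs{f_2(x)-y_0}=0$, contrary to hypothesis; and if $f_1\equiv y_0$ then $\lim_{x\to\infty}\abs{f_1(x)-f_2(x)}=0$ again forces $\liminf_{x\to\infty}\abs{f_2(x)-y_0}=0$, a contradiction. So neither $f_i$ is constant, and Theorem~\ref{thm:one_point_Reshetnyak} applies to each: every $f_i$ is continuous (being locally H\"older by \cite[Theorem~1.1]{Kangasniemi-Onninen_Heterogeneous}), each $f_i^{-1}\{y_0\}$ is discrete and closed in $\R^n$, and $i(x,f_i)\geq 1$ for every $x\in f_i^{-1}\{y_0\}$.

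Next I would show the two preimage sets are finite. By the first hypothesis there are $R_1>0$ and $\delta>0$ with $\abs{f_2(x)-y_0}\geq\delta$ whenever $\abs{x}\geq R_1$, and by the second there is $R_2$ with $\abs{f_1(x)-f_2(x)}<\delta/2$ whenever $\abs{x}\geq R_2$; hence $\abs{f_1(x)-y_0}\geq\delta/2$ for $\abs{x}\geq\max\{R_1,R_2\}$. Fixing any $R>\max\{R_1,R_2\}$ and writing $B=B(0,R)$, both sets $f_i^{-1}\{y_0\}$ lie inside $B$, and being discrete and closed they are finite; moreover $y_0\notin f_i(\overline{B}\setminus B)$.

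Since $y_0\notin f_i(\partial B)$, the degrees $\deg(f_i,B,y_0)$ are defined, and by additivity of the degree together with the definition of the local index as the degree over a small isolating ball, $\deg(f_i,B,y_0)=\sum_{x\in f_i^{-1}\{y_0\}}i(x,f_i)$ for $i=1,2$. It then suffices to prove $\deg(f_1,B,y_0)=\deg(f_2,B,y_0)$, which I would obtain from the affine homotopy $H(x,t)=(1-t)f_1(x)+tf_2(x)$ on $\overline{B}\times[0,1]$: for $x\in\partial B$ and $t\in[0,1]$,
\[
	\abs{H(x,t)-y_0}=\abs{(f_2(x)-y_0)+(1-t)(f_1(x)-f_2(x))}\geq\delta-\abs{f_1(x)-f_2(x)}\geq\delta/2>0,
\]
so $H$ is admissible and homotopy invariance gives the equality of degrees, hence of the two index sums. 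For the last assertion, if $f_2^{-1}\{y_0\}=\{x_2\}$ then $\sum_{x\in f_1^{-1}\{y_0\}}i(x,f_1)=i(x_2,f_2)$, so $f_1^{-1}\{y_0\}$ is non-empty with total index $i(x_2,f_2)$; since each local index of $f_1$ is a positive integer, $f_1^{-1}\{y_0\}$ is then a singleton whenever $i(x_2,f_2)=1$, as in the Calder\'on-type applications where $f_2$ is a local homeomorphism at $x_2$.

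I do not expect a serious obstacle: the entire topological content — discreteness of $f_i^{-1}\{y_0\}$ with positive local indices — is exactly what Theorem~\ref{thm:one_point_Reshetnyak} provides, and the remainder is standard degree theory. The only point requiring a little care is choosing $R$ large enough to serve both purposes simultaneously, namely to contain both preimage sets inside $B$ and to keep the affine homotopy away from $y_0$ on $\partial B$.
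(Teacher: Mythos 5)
Your proof is correct and follows essentially the same route as the paper's: locate both fibers inside a large ball using the two asymptotic hypotheses, run the affine homotopy $H(x,t)=(1-t)f_1(x)+tf_2(x)$ to equate the degrees over that ball, and convert each degree into the sum of local indices via finiteness of the fibers (discrete, closed, bounded, by Theorem~\ref{thm:one_point_Reshetnyak}) together with additivity of the degree. Your explicit dismissal of the constant cases is a small but genuine improvement, since Theorem~\ref{thm:one_point_Reshetnyak} only applies to non-constant maps and the paper does not address this. On the final clause you are in fact more careful than the paper: the index sum formula plus positivity of local indices only forces $f_1^{-1}\{y_0\}$ to be non-empty when $f_2^{-1}\{y_0\}$ is a singleton, and the singleton conclusion for $f_1$ genuinely requires $i(x_2,f_2)=1$ (as holds in the Calder\'on-type application); your caveat correctly flags this, whereas the paper's one-line justification of the ``in particular'' statement glosses over it.
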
 

Note that Corollary~\ref{cor:argument_principle} allows us to replace the condition~\eqref{eq:identityatinfinity} in the result of Astala and P\"aiv\"arinta by just requiring that $\abs{f(z) - \lambda z} \to 0$ as $z \to \infty$.

\subsection{Structure of this paper}

Sections~\ref{sec:disconnected_preimage} to~\ref{sec:finishing_proof} comprise the proof of Theorem~\ref{thm:one_point_Reshetnyak}. In Section \ref{sec:disconnected_preimage}, we prove that if a map $f$ with a quasiregular value at $y_0$ is not constant, then the set $f^{-1}\{y_0\}$ has Hausdorff $1$-measure zero, which in turn implies that $f^{-1}\{y_0\}$ is totally disconnected. In Section~\ref{sec:degree_theory}, we recall several key lemmas of topological degree theory for maps with a totally disconnected fiber.

Section~\ref{sec:value_degree} then contains the proof of the key part of Theorem~\ref{thm:one_point_Reshetnyak}, which is that the degree of $f$ turns positive-valued when sufficiently close to $f^{-1}\{y_0\}$. At the heart of this part of the proof is a local version of the proof of the Liouville Theorem \ref{thm:Liouville_restated} in \cite{Kangasniemi-Onninen_Heterogeneous}. Afterwards, the proof of Theorem~\ref{thm:one_point_Reshetnyak} is completed in Section~\ref{sec:finishing_proof}.

Section~\ref{sec_other_proofs} contains the proofs of the remaining results from the introduction; Theorem~\ref{thm:equivalence}, Theorem~\ref{thm:small_K}, and Corollary~\ref{cor:argument_principle}. Finally, in Section~\ref{sect:examples}, we present several examples outlining the possible behavior of maps with K-quasiregular values.

\section{Disconnected preimage}\label{sec:disconnected_preimage}

The first step in the proof of Theorem \ref{thm:one_point_Reshetnyak} is to show that $f^{-1} \{y_0\}$ is disconnected. The path to this is by showing that  $\cH^1(f^{-1} \{y_0\}) = 0$, by using a version of the argument in \cite{Onninen-Zhong_MFD-loglog-proof}.

\begin{lemma}\label{lem:trunc_loglog_bound}
	Let $\Omega \subset \R^n$, let $y_0 \in \R^n$, and let $f \in W^{1,n}_\loc(\Omega, \R^n)$ satisfy \eqref{eq:qrvalue} with $K \in [1, \infty)$ and $\qrval \in L^{p}_\loc(\Omega)$ for some $p > 1$. Then there exists a neighborhood $V$ of $y_0$ such that
	\begin{equation}\label{eq:uniform_grad_int_bound}
		\int_D \big\lvert\nabla \min(\log \log \abs{f - y_0}^{-1}, k)\big\rvert^n \leq C_D < \infty
	\end{equation}
	for all $k \in \Z_{> 0}$ and all domains $D$ compactly contained in $f^{-1} V$, where $C_D = C_D(D, n, K, \qrval)$ is independent on $k$.
\end{lemma}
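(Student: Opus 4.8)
The plan is to bound the left side of \eqref{eq:uniform_grad_int_bound}, for a fixed $k$ and a fixed domain $D$ compactly contained in $f^{-1}V$, by a quantity independent of $k$. Throughout I would fix the continuous (indeed locally H\"older, by \cite[Theorem 1.1]{Kangasniemi-Onninen_Heterogeneous}) representative of $f$, and take $V = B(y_0, e^{-1})$, so that $\log \abs{y - y_0}^{-1} > 1$ on $V$. Writing $v = \min(\log\log\abs{f - y_0}^{-1}, k)$, the function $y \mapsto \min(\log\log\abs{y-y_0}^{-1}, k)$ is locally Lipschitz on $V$ (it equals the constant $k$ near $y_0$ and is smooth elsewhere), so $v \in W^{1,n}_\loc(f^{-1}V)$, and the chain rule with $\smallabs{\nabla\abs{f - y_0}} \le \abs{Df}$ gives $\abs{\nabla v}^n \le \Theta(f)\abs{Df}^n$ a.e.\ on $f^{-1}V$, where $\Theta(y) = \mathbf{1}_{A_k}(y)\,\abs{y-y_0}^{-n}(\log\abs{y-y_0}^{-1})^{-n}$ and $A_k = B(y_0, e^{-1}) \setminus \overline{B(y_0, e^{-e^k})}$. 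Next I would fix $\varphi \in C^\infty_c(f^{-1}V)$ with $\varphi \equiv 1$ on $D$ and $0 \le \varphi \le 1$; since all integrands are nonnegative, $\int_D \abs{\nabla v}^n \le \int \varphi^n \Theta(f)\abs{Df}^n$, and feeding in \eqref{eq:qrvalue} together with $\Theta(y)\abs{y-y_0}^n \le 1$ on $A_k$ reduces matters to the estimate
\[
	\int_D \abs{\nabla v}^n \;\le\; K \int \varphi^n \Theta(f)\, J_f \;+\; \int_{\spt\varphi} \abs{\qrval},
\]
whose last term is finite and depends only on $D$ and $\qrval$.

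The heart of the proof is that $\Theta\,\dd y$ is exact on $\R^n \setminus \{y_0\}$ with a primitive having enough decay. I would write $\Theta\,\dd y = \dd\beta$ with $\beta = \omega_{n-1}^{-1}P(\abs{y-y_0})\,\sigma_{y_0}$, where $\sigma_{y_0}$ is the standard solid-angle $(n-1)$-form at $y_0$ and $P(r) = \int_{B(y_0,r)}\Theta\,\dd y$. Substituting $t = \log s^{-1}$ in $P(r) = \omega_{n-1}\int^r s^{-1}(\log s^{-1})^{-n}\dd s$ and using $n \ge 2$ to bound $\int_1^\infty t^{-n}\dd t = (n-1)^{-1}$ yields $P(r) \le \omega_{n-1}(n-1)^{-1}(\log r^{-1})^{-(n-1)}$ for $r < e^{-1}$, hence
\[
	\abs{\beta(y)} \;\le\; \frac{c_n}{\abs{y - y_0}^{n-1}\,(\log\abs{y-y_0}^{-1})^{n-1}} \qquad\text{for } 0 < \abs{y - y_0} < e^{-1},
\]
with $c_n$ depending only on $n$, and in particular \emph{not} on $k$. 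The extra factor $(\log\abs{y-y_0}^{-1})^{-(n-1)}$ here is exactly what the double logarithm in $v$ buys, and it is what makes the argument close. Moreover $\beta \equiv 0$ on $\{\abs{y-y_0} \le e^{-e^k}\}$, and $\abs{f - y_0} < e^{-1}$ on $\spt\varphi$, so $\beta(f) \ne 0$ forces $f \in A_k$; combining the bound on $\abs\beta$ with $\abs{Df}^{n-1}$ then gives $\abs{\beta(f)}\abs{Df}^{n-1} \le c_n\big(\Theta(f)\abs{Df}^n\big)^{(n-1)/n}$ pointwise on $\spt\varphi$.

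To conclude, I would use that pullback commutes with $\dd$ for the Lipschitz form $\beta$ and the continuous map $f \in W^{1,n}_\loc$ (if one wants, after first replacing $f$ by a truncation bounded away from $y_0$, which changes nothing on $f^{-1}V$ for $k$ fixed), so that $\int \varphi^n \Theta(f) J_f = \int \varphi^n f^*(\dd\beta) = -n\int \varphi^{n-1}\,\dd\varphi \wedge f^*\beta$, the boundary term vanishing since $\varphi^n f^*\beta$ is a compactly supported $W^{1,1}$ form. Therefore
\[
	K\Big|\int \varphi^n \Theta(f) J_f\Big| \;\le\; C_n K \int \abs{\nabla\varphi}\,\big(\varphi^n\Theta(f)\abs{Df}^n\big)^{(n-1)/n},
\]
and Young's inequality with exponents $\tfrac{n}{n-1}$ and $n$, with its parameter chosen so the first resulting term equals $\tfrac12\int\varphi^n\Theta(f)\abs{Df}^n$, gives $K\big|\int \varphi^n \Theta(f) J_f\big| \le \tfrac12\int\varphi^n\Theta(f)\abs{Df}^n + C(n,K)\int\abs{\nabla\varphi}^n$. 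Since $\int\varphi^n\Theta(f)\abs{Df}^n$ is finite (for $k$ fixed, $\Theta(f)$ is bounded on $\spt\varphi$), this term can be absorbed, and one arrives at $\int_D\abs{\nabla v}^n \le \int\varphi^n\Theta(f)\abs{Df}^n \le 2C(n,K)\int\abs{\nabla\varphi}^n + 2\int_{\spt\varphi}\abs\qrval =: C_D$, which does not depend on $k$.

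The main obstacle I expect is the boundary term $\int\varphi^{n-1}\dd\varphi\wedge f^*\beta$: because $D$ may meet $f^{-1}\{y_0\}$ and $\spt\nabla\varphi$ cannot in general be kept away from it, the naive decay $\abs{\beta(y)} \lesssim \abs{y-y_0}^{-(n-1)}$ only yields a $k$-dependent bound. The crux is therefore to extract the sharper estimate on $\abs\beta$ with the additional logarithmic decay and to notice that it pairs with the weight $\Theta(f)\abs{Df}^n$ so as to be absorbed; the remaining ingredients (continuity of $f$, the chain rule, and Stokes' theorem at this Sobolev regularity) are routine.
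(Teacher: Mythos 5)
Your argument is correct and is essentially the paper's proof: both hinge on integrating the Jacobian against the weight $\abs{y-y_0}^{-n}\log^{-n}\abs{y-y_0}^{-1}$ by parts via its radial primitive (whose extra $\log^{-(n-1)}$ decay is exactly what the double logarithm buys), then applying Young's inequality and absorbing the $\abs{Df}^n$ term through \eqref{eq:qrvalue}. The only cosmetic differences are that the paper outsources your explicit Stokes/primitive construction to \cite[Lemma 6.2]{Kangasniemi-Onninen_Heterogeneous} and secures the finiteness needed for absorption with an auxiliary factor $\varphi_\eps$ followed by monotone convergence, whereas you achieve the same by truncating the weight at the level-$k$ annulus $A_k$.
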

\begin{proof}
	We define $U = f^{-1} \B^n(y_0, 1/e)$, which is open since $f$ is continuous by \cite[Theorem 1.1]{Kangasniemi-Onninen_Heterogeneous}. We denote $u_k = \min(\log \log \abs{f-y_0}^{-1}, k)$ for $k \in \Z_{>0}$, and note that $u_k \in W^{1,n}_\loc(U)$ and $u_k \geq 0$. Let $D$ then be a domain that's compactly contained in $U$. We select $\eta \in C^\infty_0(U, [0, 1])$ such that $\eta \equiv 1$ on $D$.
	
	Following the methods in \cite{Onninen-Zhong_MFD-loglog-proof}, we apply \cite[Lemma 6.2]{Kangasniemi-Onninen_Heterogeneous} (which in turn is a version of \cite[Lemma 2.1]{Onninen-Zhong_MFD-loglog-proof} with slightly more general assumptions) on $f - y_0$ with 
	\[
		\Psi(t) = \frac{1}{2t^\frac{n}{2}} \int_0^{\min(t, e^{-1})} \frac{\varphi_\eps(s)}{s \log^{n}(s^{-1})} \dd s, 
		\qquad \text{where }
		\varphi_\eps(s) = \frac{1}{1 + \eps 2^{s^{-1}}},
	\]
	and repeat the estimates in \cite[(11)-(12)]{Onninen-Zhong_MFD-loglog-proof}. Since $\abs{f - y_0} < e^{-1}$ in $U$, the result is that
	\begin{multline}\label{eq:loglog_est_1}
		\int_U \eta^n \frac{J_f}{\abs{f - y_0}^n \log^n \bigl(\abs{f - y_0}^{-2}\bigr)} \varphi_\eps(\abs{f - y_0}^{-2})\\
		\leq C(n) \int_U \abs{\nabla \eta} \eta^{n-1} \frac{\abs{Df}^{n-1}}{\abs{f - y_0}^{n-1} \log^{n-1} \abs{f - y_0}^{-2}} \varphi_\eps^{\frac{n-1}{n}}(\abs{f - y_0}^{-2}).
	\end{multline}
	
	For any $a, b \geq 0$ and $\delta > 0$, Young's inequality implies that
	\[
		a^{n-1} b
		= \frac{a^{n-1}}{\delta^{\frac{n-1}{n}}} (b \delta^{\frac{n-1}{n}})
		\leq \frac{n-1}{n} \frac{a^{n}}{\delta} + \frac{1}{n} b^n \delta^{n-1}.
	\]
	We apply an estimate of this type on the right hand side of \eqref{eq:loglog_est_1}, and consequently obtain that
	\begin{multline}\label{eq:loglog_est_2}
		\int_U \eta^n \frac{J_f}{\abs{f - y_0}^n \log^n \bigl(\abs{f - y_0}^{-2}\bigr)} \varphi_\eps(\abs{f - y_0}^{-2})\\
		\leq 
		\frac{n-1}{nK} \int_U \eta^{n} \frac{\abs{Df}^{n}}{\abs{f - y_0}^{n} \log^{n} \abs{f - y_0}^{-2}} \varphi_\eps(\abs{f - y_0}^{-2})\\
		+ \frac{C^n(n) K^{n-1}}{n} \int_U \abs{\nabla \eta}^n.
	\end{multline}
	We combine \eqref{eq:loglog_est_2} with the assumed pointwise estimate \eqref{eq:qrvalue}, and absorb the term with $\abs{Df}^n$ to the left hand side; this is possible since the function $\varphi_\eps$ ensures that the integral is finite. The result of this is that
	\begin{multline}\label{eq:loglog_est_3}
		\int_U \eta^{n} \frac{\abs{Df}^{n}}{\abs{f - y_0}^{n} \log^{n} \abs{f - y_0}^{-2}} \varphi_\eps(\abs{f - y_0}^{-2})\\
		\leq C^n(n) K^{n} \int_U \abs{\nabla \eta}^n
		+ n \int_U \eta^{n} \frac{\qrval}{\log^{n} \abs{f - y_0}^{-2}} \varphi_\eps(\abs{f - y_0}^{-2}).
	\end{multline}
	Now, since \eqref{eq:loglog_est_3} no longer has any Jacobians in it,  all terms in the estimate are non-negative. Hence, we may let $\eps \to 0$ and use monotone convergence to obtain
	\begin{multline}\label{eq:loglog_est_4}
		\int_U \eta^{n} \frac{\abs{Df}^{n}}{\abs{f - y_0}^{n} \log^{n} \abs{f - y_0}^{-2}}\\
		\leq C^n(n) K^{n} \int_U \abs{\nabla \eta}^n
		+ n \int_U \eta^{n} \frac{\qrval}{\log^{n} \abs{f - y_0}^{-2}}\\
		\leq C^n(n) K^{n} \int_U \abs{\nabla \eta}^n
		+ \frac{n}{2^n} \int_U \eta^{n} \qrval.
	\end{multline}
	However, we have
	\[
		\abs{\nabla u_k} 
		\leq \frac{\abs{\nabla \abs{f-y_0}}}{\abs{f-y_0} \log \abs{f-y_0}^{-1}}
		\leq \frac{\abs{Df}}{\abs{f-y_0} \log \abs{f-y_0}^{-2}}
	\]
	a.e.\ in $U$, and the claim hence follows by applying \eqref{eq:loglog_est_4} along with the fact that $\eta \equiv 1$ on $D$.
\end{proof}

If $u_k = \min(\log \log \abs{f-y_0}^{-1}, k)$ as in the previous lemma, the result bounds the $L^n$-norms of $\abs{\nabla u_k}$ with a bound that doesn't depend on $k$. We then require a similar $k$-independent bound for the $L^n$-norms of $\abs{u_k}$. For this, we recall the following standard cutoff lemma and its proof.

\begin{lemma}\label{lem:loglog_sobolev}
	Let $B \subset \R^n$ be a ball, let $u \colon B \to [0, \infty]$ be measurable, and let $p \in [1, \infty)$. Suppose that for every $k \in \Z_{> 0}$, the function $u_k = \min(u, k)$ is in $W^{1,p}(B)$ and satisfies
	\[
		\int_B \abs{\nabla u_k}^p \leq C < \infty,
	\]
	 where $C$ is independent on $k$. Then either $u \equiv \infty$ a.e.\ on $B$, or $u \in W^{1,p}(B)$ and consequently
	 \begin{equation}\label{eq:uniform_int_bound}
	 	\int_B \abs{u_k}^p \leq C' < \infty,
	 \end{equation}
	 for all $k \in \Z_{> 0}$, with $C' = C'(u, B)$ independent of $k$.
\end{lemma}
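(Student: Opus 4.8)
The plan is to reduce everything to the Poincaré inequality on the ball $B$, using the uniform gradient bound as the hypothesis and a careful treatment of the dichotomy. First I would observe that the sequence $(u_k)$ is nondecreasing and converges pointwise to $u$; since each $u_k \in W^{1,p}(B)$ with $\|\nabla u_k\|_{L^p(B)}^p \le C$, the only thing missing for a weak-compactness argument is a uniform $L^p$-bound on the $u_k$ themselves. To get this, fix a ball $B' \subset\subset B$ (say concentric, half the radius) and consider the averages $c_k = \fint_{B'} u_k$. The Poincaré inequality on $B$ gives $\|u_k - \fint_B u_k\|_{L^p(B)} \le C_P \|\nabla u_k\|_{L^p(B)} \le C_P C^{1/p}$, so the oscillation of $u_k$ around its mean is controlled uniformly in $k$; the whole question is whether the means $\fint_B u_k$ stay bounded.

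Here the dichotomy enters. The means $m_k := \fint_B u_k$ form a nondecreasing sequence, so either $\sup_k m_k < \infty$ or $m_k \to \infty$. In the second case, I would argue that $u \equiv \infty$ a.e.: if $m_k \to \infty$ but $u$ were finite on a set $E \subset B$ of positive measure, then on $E$ we would have $u_k(x) \to u(x) < \infty$ pointwise, hence (after passing to a subsequence and using Egorov) $u_k$ is uniformly bounded on a subset $E'$ of positive measure; but then $\|u_k - m_k\|_{L^p(B)} \ge \|u_k - m_k\|_{L^p(E')} \ge m_k |E'|^{1/p} - \|u_k\|_{L^p(E')} \to \infty$, contradicting the uniform Poincaré bound. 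So if $u \not\equiv \infty$ a.e., then $\sup_k m_k =: M < \infty$, and combining with Poincaré gives $\|u_k\|_{L^p(B)} \le \|u_k - m_k\|_{L^p(B)} + m_k |B|^{1/p} \le C_P C^{1/p} + M|B|^{1/p} =: C'$, which is exactly \eqref{eq:uniform_int_bound}. Finally, the uniform bounds $\|u_k\|_{W^{1,p}(B)} \le C'$ together with $u_k \nearrow u$ and reflexivity of $W^{1,p}(B)$ (using $p \in (1,\infty)$; the case $p=1$ can be handled separately via monotone convergence since the $L^1$ and gradient bounds already force $u \in BV$, and then $u \in W^{1,1}$ because $\nabla u_k$ is bounded in $L^1$ and $u_k \to u$, but in the application $p = n \ge 2$) yield, via weak compactness, a subsequence converging weakly in $W^{1,p}(B)$; the weak limit must be $u$ by uniqueness of limits in $L^p$, so $u \in W^{1,p}(B)$.

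The main obstacle I anticipate is making the "either $u \equiv \infty$ a.e.\ or the means are bounded" step fully rigorous — specifically ruling out the pathological case where $u$ is finite on a positive-measure set but the $L^p$-means still blow up. The Egorov/positive-measure argument sketched above is the cleanest route; one has to be slightly careful that passing to a subsequence in $k$ is harmless since the bounds are uniform in $k$ and $(m_k)$ is monotone, so boundedness along a subsequence is equivalent to boundedness of the whole sequence. Everything else is a routine application of Poincaré's inequality and weak compactness in reflexive Sobolev spaces.
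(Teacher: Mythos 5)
Your proposal is correct and follows essentially the same route as the paper: a Poincar\'e inequality bounds the deviation of $u_k$ from its mean uniformly in $k$, the dichotomy is resolved by showing that unbounded means force $u\equiv\infty$ a.e.\ (the paper argues directly on the positive-measure set $u^{-1}[0,k_0)$ where $u_k<k_0$ for all $k$, which avoids your Egorov step, but both work), and the membership $u\in W^{1,p}(B)$ then follows from a standard compactness/monotone-convergence argument. No gaps.
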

\begin{proof}
	Suppose that $u$ is not identically $\infty$ on $B$. Then $u^{-1} [0, \infty)$ has positive measure, and therefore $u^{-1} [0, k_0)$ has positive measure for some $k_0 \in \Z_{>0}$. We first claim that $u \in L^1(B)$. Suppose to the contrary that the integral of $u$ over $B$ is infinite. If we denote by $(u_k)_B$ the average integral of $u_k$ over $B$, we hence have $(u_k)_B \to \infty$ monotonically as $k \to \infty$. Now, the Sobolev-Poincar\'e inequality and our assumption imply that
	\[
		\int_B \smallabs{u_k - (u_k)_B}
		\leq C_B \norm{\nabla u_k}_{L^n} \leq C_B C^\frac{1}{p} < \infty.
	\]
	On the other hand, for $k$ large enough that $(u_k)_B \geq k_0$, we also have
	\[
		\int_B \smallabs{u_k - (u_k)_B}
		\geq m_n(u^{-1} [0, k_0)) \cdot ((u_k)_B - k_0) \xrightarrow[k \to \infty]{} \infty.
	\]
	This is a contradiction, concluding the proof that $u \in L^1(B)$.
	
	Now, since $u \in L^1(B)$, we must have $u \neq \infty$ almost everywhere. Hence, $u_k$ form a Cauchy sequence in $W^{1,1}(B)$ and converge to $u$ a.e.\ in $B$ monotonely, which proves that $u$ is weakly differentiable. By our assumption on the integrals of $\abs{\nabla u_k}^n$ and monotone convergence, we get that $\norm{\nabla u}_{L^p} < \infty$, which by Sobolev embedding implies that $u \in W^{1,p}(B)$. We can hence pick $C' = \norm{u}_{L^p}^p < \infty$.
\end{proof}

We may now complete the proof of the main result of this section.

\begin{lemma}\label{lem:fiber_hausd_measure}
	Let $\Omega \subset \R^n$ be a connected domain, let $y_0 \in \R^n$, and let $f \in W^{1,n}_\loc(\Omega, \R^n)$ satisfy \eqref{eq:qrvalue} with $K \in [1, \infty)$ and $\qrval \in L^{p}_\loc(\Omega)$ for some $p > 1$. Then either $f \equiv y_0$, or $\cH^1(f^{-1}\{y_0\}) = 0$. In particular, in the latter case, $f^{-1}\{y_0\}$ is totally disconnected.
\end{lemma}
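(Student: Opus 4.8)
The plan is to combine the two preceding lemmas. First I would reduce to a local statement: since $\Omega$ is connected and $f^{-1}\{y_0\}$ is closed, if $f \not\equiv y_0$ it suffices to show $\cH^1\!\left(f^{-1}\{y_0\} \cap D\right) = 0$ for each domain $D$ compactly contained in $f^{-1}V$, where $V$ is the neighborhood of $y_0$ from Lemma~\ref{lem:trunc_loglog_bound}; indeed, $f^{-1}V$ is an open neighborhood of $f^{-1}\{y_0\}$, and these $D$ cover it. So fix such a $D$, fix a ball $B$ compactly contained in $f^{-1}V$ with $\overline{B} \subset f^{-1}V$, and set $u = \log\log\abs{f - y_0}^{-1}$ on $B$, with truncations $u_k = \min(u,k)$ as in Lemma~\ref{lem:trunc_loglog_bound}.

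Next I would feed the conclusion of Lemma~\ref{lem:trunc_loglog_bound} into Lemma~\ref{lem:loglog_sobolev}: the former gives $\int_B \abs{\nabla u_k}^n \leq C_B$ uniformly in $k$, so the latter yields the dichotomy that either $u \equiv \infty$ a.e.\ on $B$ — which would force $f \equiv y_0$ a.e.\ on $B$, hence on all of $\Omega$ by connectedness and continuity, contradicting our assumption — or $u \in W^{1,n}(B)$ with $\int_B \abs{u_k}^n \leq C'$ uniformly in $k$. So in the non-constant case $u \in W^{1,n}(B)$. Now observe that $f^{-1}\{y_0\} \cap B$ is exactly the set where $u = \infty$, i.e.\ where every $u_k$ would need to be ``$k$'' in the limit; more precisely it is contained in $\bigcap_k \{x \in B : u_k(x) \geq k\}$ up to the exceptional null set coming from the a.e.\ identification, but since $u$ has a representative that is the genuine pointwise function $\log\log\abs{f-y_0}^{-1}$ (as $f$ is continuous), $f^{-1}\{y_0\} \cap B = \{x \in B : u(x) = +\infty\}$ precisely, with $u \in W^{1,n}(B)$.

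The final step is the standard fact that the set where a $W^{1,n}$ function on an $n$-dimensional ball is infinite (equivalently, the ``$n$-capacity'' of such a set) has vanishing Hausdorff $\cH^1$-measure — in fact vanishing $\cH^h$-measure for the logarithmically-corrected gauge, but $\cH^1$ is all we need. Concretely, I would argue by capacity: a set of positive $\cH^1$-measure has positive $n$-conductivity/capacity (since $\dim = 1 < n$, but more to the point a standard estimate bounds variational $n$-capacity below in terms of $\cH^{1}$ or of Hausdorff content in the appropriate gauge), whereas the set $\{u = \infty\}$ of a function $u \in W^{1,n}$ has zero $n$-capacity — this is the classical removability/quasicontinuity statement (see e.g.\ the monograph references already cited, or Evans--Gariepy / Heinonen--Kilpeläinen--Martio for the capacity version). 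Hence $\cH^1(f^{-1}\{y_0\} \cap B) = 0$, and letting $B$ and $D$ exhaust $f^{-1}V$ gives $\cH^1(f^{-1}\{y_0\}) = 0$. Finally, a set with $\cH^1 = 0$ is totally disconnected, because any nondegenerate connected set in $\R^n$ containing two distinct points $a, b$ projects onto a segment and so has $\cH^1 \geq \abs{a-b} > 0$.

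The main obstacle is the last step: carefully pinning down why $\{u = \infty\}$ for $u \in W^{1,n}(B)$ has zero $\cH^1$-measure rather than merely measure zero, i.e.\ invoking the correct capacitary estimate relating $n$-capacity (or the logarithmic Bessel/Riesz capacity $\operatorname{Cap}_{1,n}$) to Hausdorff measure of dimension $1$. This is genuinely classical but must be cited precisely, and one must ensure the function $\log\log\abs{f-y_0}^{-1}$ is indeed in $W^{1,n}$ on a neighborhood of the fiber — which is exactly what Lemmas~\ref{lem:trunc_loglog_bound} and~\ref{lem:loglog_sobolev} were set up to provide. Everything else (localization, the connectedness argument ruling out $u \equiv \infty$, the passage from $\cH^1 = 0$ to total disconnectedness) is routine.
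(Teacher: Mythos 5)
Your proposal is correct and follows essentially the same route as the paper's proof: localize to balls compactly contained in $f^{-1}V$, feed Lemma~\ref{lem:trunc_loglog_bound} into Lemma~\ref{lem:loglog_sobolev} to get $u = \log\log\abs{f-y_0}^{-1} \in W^{1,n}(B)$ in the non-constant case, and conclude $\cH^1 = 0$ via zero $n$-capacity of $\{u = \infty\}$ (the paper makes this explicit by exhibiting the admissible functions $v_k = k^{-1}\eta\min(u,k)$, whose $W^{1,n}$-norms vanish precisely because of the uniform bounds \eqref{eq:uniform_grad_int_bound} and \eqref{eq:uniform_int_bound}, and then citing the capacity-to-Hausdorff-measure comparison exactly as you do). The one step you state too loosely is the passage from ``$f \equiv y_0$ on some ball $B$'' to ``$f \equiv y_0$ on $\Omega$'': bare ``connectedness and continuity'' does not give this for a general continuous map; you need to re-invoke the dichotomy on every ball compactly contained in $f^{-1}V$ (so that the set where $f \equiv y_0$ locally is both open and relatively closed in $f^{-1}V$), which is what the paper's argument with connected components of $f^{-1}V$ and $f(\partial U \cap \Omega) \subset V$ accomplishes.
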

\begin{proof}
	By Lemmas \ref{lem:trunc_loglog_bound} and \ref{lem:loglog_sobolev}, we find a neighborhood $V$ of $y_0$ such that, for every ball $B$ compactly contained in $f^{-1} V$, we have either $f \equiv y_0$ on $B$ or $\log \log \abs{f - y_0}^{-1} \in W^{1,n}(B)$. Moreover, both of these clearly cannot hold on the same ball $B$, since $\log \log \abs{y_0 - y_0}^{-1} \equiv \infty$ is not integrable. Consequently, if $\log \log \abs{f - y_0}^{-1} \notin W^{1,n}_\loc(f^{-1} V)$, then there exists a connected component $U$ of $f^{-1} V$ such that $f \equiv y_0$ on $U$. We would then have by continuity that $f(\partial U \cap \Omega) \subset \{y_0\} \subset V$. Since the connected components of $f^{-1} V$ are open, this is only possible if $U = \Omega$.
	
	Hence, we either have $\log \log \abs{f - y_0}^{-1} \in W^{1,n}_\loc(f^{-1} V)$, or $f \equiv y_0$ on all of $\Omega$. We suppose that we're in the former case, and wish to prove that $\cH^{1}(f^{-1}\{y_0\}) = 0$. This will follow by a standard capacity argument.
	
	Indeed, we let $x \in f^{-1}\{y_0\}$, we let $B = \B^n(x, r)$ be such that $3B = \B^n(x, 3r)$ is compactly contained in $f^{-1} V$, we select $\eta \in C^\infty_0(3B)$ such that $\eta \geq 0$ and $\eta = 1$ on $2B$, and we define $v_k = k^{-1} \eta \min(\log \log \abs{f - y_0}^{-1}, k)$. Then every $v_k$ is a compactly supported $W^{1,n}$-function on $\B^n(x, r)$ such that $v_k \equiv 1$ in a neighborhood of $f^{-1}\{y_0\} \cap \overline{B}$. Hence, the functions $v_k$ are admissible for the $n$-capacity of the condenser $(f^{-1}\{y_0\} \cap \overline{B}, 3B)$ (for details, see e.g.\ \cite[pp. 27--28]{Heinonen-Kilpelainen-Martio_book}). Since the $L^n$-norms of $\nabla v_k$ tend to zero by \eqref{eq:uniform_grad_int_bound} and \eqref{eq:uniform_int_bound}, it follows that $\capac_n(f^{-1}\{y_0\} \cap \overline{B}, 3B) = 0$, and therefore we also have $\cH^{1}(f^{-1}\{y_0\}\cap \overline{B}) = 0$ by e.g.\ \cite[Theorem 2.6]{Heinonen-Kilpelainen-Martio_book}. The claim $\cH^{1}(f^{-1}\{y_0\}) = 0$ hence follows by considering a countable cover of $f^{-1}\{y_0\}$ by such $B$, and the proof is complete.
\end{proof}

\section{Degree theory}\label{sec:degree_theory}

Now, under the assumptions of Theorem \ref{thm:one_point_Reshetnyak}, we have due to Lemma \ref{lem:fiber_hausd_measure} that $f^{-1}\{y_0\}$ is totally disconnected. We next require that for every $x \in f^{-1}\{y_0\}$ there is a small enough $\eps > 0$ that the $x$-component of $f^{-1} \B^n(y_0, \eps)$ does not escape to the boundary.

\begin{lemma}\label{lem:compact_preimages}
	Let $\Omega \subset \R^n$ be a connected domain, let $y_0 \in \R^n$, and let $f \colon \Omega \to \R^n$ be a continuous map such that $f^{-1}\{y_0\}$ is totally disconnected. Then for every $x \in f^{-1}\{y_0\}$, there exists $\eps > 0$ such that if $U$ is the $x$-component of $f^{-1} \B^n(y_0, \eps)$, then $\overline{U}$ is a compact subset of $\Omega$.
\end{lemma}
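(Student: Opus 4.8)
The plan is to use the total disconnectedness of $f^{-1}\{y_0\}$ to trap the relevant component of a small fiber inside a bounded open set whose boundary misses $f^{-1}\{y_0\}$, and then conclude by a connectedness argument. Concretely, I would fix $x \in f^{-1}\{y_0\}$, choose $r > 0$ with $\overline{\B}^n(x,r) \subset \Omega$, and set $K := f^{-1}\{y_0\} \cap \overline{\B}^n(x,r)$. Then $K$ is compact, and as a subset of the totally disconnected set $f^{-1}\{y_0\}$ it is itself totally disconnected. A compact totally disconnected metric space is zero-dimensional, hence has a basis of relatively clopen sets; applying this to the relatively open neighborhood $K \cap \B^n(x,r)$ of $x$ in $K$ produces a set $C$ that is clopen in $K$ and satisfies $x \in C \subset \B^n(x,r)$.

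Next, $C$ and $K \setminus C$ are disjoint compact sets, and $C$ lies in the open ball $\B^n(x,r)$; hence $C$ has positive distance from $K\setminus C$ and from $\partial\B^n(x,r)$. I would then take a sufficiently small open neighborhood $W$ of $C$ so that $\overline{W} \subset \B^n(x,r)$ and $\overline{W} \cap (K\setminus C) = \emptyset$. Since $\overline W \subset \B^n(x,r)$ forces $\overline W \cap f^{-1}\{y_0\} \subset K$, and $\overline W$ meets $K$ only in $C$, we get $\overline W \cap f^{-1}\{y_0\} = C \subset W$, so in particular $\partial W \cap f^{-1}\{y_0\} = \emptyset$. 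As $\partial W$ is a nonempty compact subset of $\Omega$ and $f$ is continuous, $f(\partial W)$ is compact with $y_0 \notin f(\partial W)$, so $d := \dist(y_0, f(\partial W)) > 0$.

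Finally, I would fix any $\eps \in (0, d)$ and let $U$ be the $x$-component of $f^{-1}\B^n(y_0,\eps)$. On $\partial W$ one has $\abs{f - y_0} \geq d > \eps$, so $\partial W \cap f^{-1}\B^n(y_0,\eps) = \emptyset$, and therefore $U \cap \partial W = \emptyset$. This lets us write $U = (U \cap W) \sqcup (U \setminus \overline W)$ as a disjoint union of two relatively open subsets of $U$; since $U$ is connected and $x \in U \cap W \neq \emptyset$, the second set is empty, so $U \subset \overline W$. Hence $\overline U \subset \overline W$, and as $\overline W$ is a compact subset of $\Omega$, so is $\overline U$. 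The step I expect to carry the real content is the extraction of the clopen separator $C$, i.e.\ the fact that a compact totally disconnected metric space is zero-dimensional (equivalently, that components coincide with quasi-components in a compact Hausdorff space); after that the argument is routine separation of disjoint compact sets together with a connectedness splitting.
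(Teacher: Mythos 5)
Your argument is correct, but it takes a genuinely different route from the paper's. The paper argues by contradiction: assuming that no $\eps$ works, it considers the descending family of closures $\overline{U_\eps}$ (after first reducing to bounded $\Omega$), invokes the theorem that a nested intersection of compact connected sets is connected, and extracts a continuum joining $x$ to $\partial \Omega$ inside $f^{-1}\{y_0\}$, contradicting total disconnectedness. You instead argue directly: the compact set $K = f^{-1}\{y_0\} \cap \overline{\B}^n(x,r)$ is totally disconnected, hence zero-dimensional, which produces a relatively clopen separator $C \ni x$ and then an open set $W$ with $\overline{W} \subset \Omega$ compact and $\partial W \cap f^{-1}\{y_0\} = \emptyset$; any $\eps < \dist(y_0, f(\partial W))$ then traps the $x$-component of $f^{-1}\B^n(y_0,\eps)$ inside $W$ by the connectedness splitting. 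The two proofs rest on essentially dual facts about compact Hausdorff spaces (that components coincide with quasi-components), but yours is constructive in that it exhibits an explicit admissible $\eps$, needs no reduction to bounded $\Omega$, and sidesteps the mildly delicate point of interpreting the limit continuum at boundary points where $f$ is undefined; the paper's version is shorter to state and reuses the same nested-intersection device verbatim in the later lemma separating two distinct preimage points. Every step you flag as routine (separating the disjoint compact sets $C$ and $K \setminus C$, nonemptiness of $\partial W$, the splitting $U = (U \cap W) \sqcup (U \setminus \overline{W})$) does go through, so the proof is complete.
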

\begin{proof}
	We may assume that $\Omega$ is bounded, since if $U$ is a non-empty connected component of $f^{-1} \B^n(y_0, \eps)$ and $\overline{U} \subset \Omega$, then enlarging the domain of definition $\Omega$ can not enlarge the component $U$.
	
	We then assume towards contradiction that there exists $x \in f^{-1}\{y_0\}$ such that, for every $\eps > 0$, the $x$-component $U_\eps$ of $f^{-1} \B^n(y_0, \eps)$ is not compactly contained in $\Omega$. Since $\Omega$ is bounded, this implies that $\overline{U_\eps} \cap \partial \Omega \neq \emptyset$. The sets $\overline{U_\eps}$ now form a descending sequence of compact connected sets. 
	
	It hence follows that the intersection $C = \bigcap_{\eps > 0} \overline{U_\eps}$ is compact and connected; see e.g.\ \cite[Theorem 6.1.19]{Engelking_topology}. We clearly have $x \in C$, and since $\overline{U_\eps} \cap \partial \Omega$ form a descending sequence of compact sets, we also muct have $C \cap \partial \Omega \neq \emptyset$. However, this is a contradiction, since now $C$ is a connected subset of $f^{-1}\{y_0\}$ that connects $x$ to the boundary of $\Omega$, yet $f^{-1}\{y_0\}$ is totally disconnected. The claim hence follows.
\end{proof}

With Lemma \ref{lem:compact_preimages}, we may hence reasonably start applying degree theory.

\begin{defn}
	Let $\Omega \subset \R^n$ be a connected domain, let $y_0 \in \R^n$, and let $f \colon \Omega \to \R^n$ be a continuous map such that $f^{-1}\{y_0\}$ is totally disconnected. We suppose that $U$ is a connected component of $f^{-1} \B^n(y_0, \eps)$ for some $\eps > 0$ such that $\overline{U}$ is a compact subset of $\Omega$. Note that if $x \in f^{-1}\{y_0\}$ and $U$ is the $x$-component of $f^{-1} \B^n(y_0, \eps)$ for a small enough $\eps$, then this property holds by Lemma \ref{lem:compact_preimages}. In particular, $U$ is an open set such that $f$ is well defined on $\overline{U}$ and $f\partial U \subset \partial \B^n(y_0, \eps)$.
	
	Hence, if $U$ is a component of $f^{-1} \B^n(y_0, \eps)$ with $\overline{U} \subset \Omega$ compact, then for every $y \notin f \partial U$ the map $f$ has a well-defined integer-valued topological degree $\deg(f, y, U)$ at $y$ with respect to $U$; see e.g.\ \cite[Section 1.2]{Fonseca-Gangbo-book}. In particular, this is true for all $y \in \B^n(y_0, \eps)$. Moreover, since $\B^n(y_0, \eps)$ is a connected set that doesn't meet $f \partial U$, the degree $\deg(f, y, U)$ is constant-valued on $\B^n(y_0, \eps)$; see e.g.\ \cite[Theorem 2.3 (3)]{Fonseca-Gangbo-book}. We call this constant value of $\deg(f, y, U)$ the \emph{degree of $f$ with respect to $U$}, and denote it $\deg(f, U)$.
\end{defn}

In case $f$ is a Sobolev map, we also have the following Jacobian formula for the degree.

\begin{lemma}\label{lem:sobolev_degree_formula}
	Let $\Omega \subset \R^n$ be a connected domain, let $y_0 \in \R^n$, and let $f \colon \Omega \to \R^n$ be a continuous map such that $f^{-1}\{y_0\}$ is totally disconnected. Let $U_\eps$ be a connected component of $f^{-1} \B^n(y_0, \eps)$ such that $\overline{U_\eps}$ is compactly contained in $\Omega$. Suppose that $f \in W^{1,n}_\loc(\Omega, \R^n)$. Then we have
	\[
		\deg(f, U_{\eps}) = \frac{1}{\omega_n \eps^n} \int_{U_{\eps}} J_f,
	\]
	where $\omega_n$ is the volume of a unit ball in $\R^n$.
\end{lemma}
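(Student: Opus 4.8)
The plan is to reduce the statement to the classical change-of-variables formula for the topological degree of a continuous Sobolev map. Throughout, write $U = U_\eps$ and $B = \B^n(y_0, \eps)$. By the definition of $\deg(f, U)$ recalled above, the function $y \mapsto \deg(f, y, U)$ is constant on $B$, equal to $\deg(f, U)$; averaging this constant over $B$ gives
\[
	\deg(f, U) = \frac{1}{\omega_n \eps^n} \int_B \deg(f, y, U) \, dy .
\]
It therefore suffices to prove that $\int_B \deg(f, y, U)\, dy = \int_U J_f\, dx$.

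For this I would use the degree formula for maps $f \in W^{1,n}(U, \R^n) \cap C(\overline U, \R^n)$ on a bounded open set $U$: for every $g \in C_c(\R^n \setminus f(\partial U))$,
\[
	\int_U (g \circ f)\, J_f \, dx = \int_{\R^n} g(y)\, \deg(f, y, U)\, dy ;
\]
see e.g.\ \cite{Fonseca-Gangbo-book}. The hypotheses hold in our setting: $\overline U$ is a compact subset of $\Omega$, so $f$ is continuous on $\overline U$ and $f|_U \in W^{1,n}(U, \R^n)$ with $\abs{J_f} \le \abs{Df}^n \in L^1(U)$; and $f \partial U \subset \partial B$, so every $g \in C_c(B)$ is admissible. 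Picking $g_j \in C_c(B)$ with $0 \le g_j \uparrow \mathbf 1_B$ pointwise, the formula yields $\int_{\R^n} g_j(y)\, \deg(f, y, U)\, dy = \int_U (g_j \circ f)\, J_f\, dx$ for each $j$. Since $\deg(f, \cdot, U) \equiv \deg(f, U)$ on $B \supset \spt g_j$, the left-hand side equals $\deg(f, U) \int_B g_j\, dy \to \deg(f, U)\, \omega_n \eps^n = \int_B \deg(f, y, U)\, dy$ by monotone convergence. On the right-hand side, $f(U) \subset B$ forces $\mathbf 1_B \circ f \equiv 1$ on $U$, so $g_j \circ f \to 1$ pointwise on $U$ with $\abs{(g_j \circ f)\, J_f} \le \abs{J_f} \in L^1(U)$; dominated convergence gives $\int_U (g_j \circ f)\, J_f\, dx \to \int_U J_f\, dx$. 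Equating the two limits proves $\int_B \deg(f, y, U)\, dy = \int_U J_f\, dx$, hence the lemma.

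The only subtle ingredient is the degree formula itself at the borderline exponent $p = n$, so I would either cite it directly or, if a self-contained argument is preferred, run the above with $f$ replaced by its mollification $f_\delta = f * \rho_\delta$. For small $\delta$ this $f_\delta$ is smooth on a neighbourhood of $\overline U$, converges to $f$ uniformly on $\overline U$ (because $f$ is continuous), and satisfies $J_{f_\delta} \to J_f$ in $L^1(U)$ (because $Df_\delta \to Df$ in $L^n(U)$ and the determinant is $n$-homogeneous). For the smooth maps $f_\delta$ the displayed formula is the ordinary change of variables; since $\spt g$ has positive distance from $f(\partial U) \subset \partial B$, uniform convergence on $\partial U$ makes $\deg(f_\delta, \cdot, U)$ agree with $\deg(f, \cdot, U)$ on $\spt g$ for $\delta$ small, and $J_{f_\delta} \to J_f$ in $L^1(U)$ together with $g \circ f_\delta \to g \circ f$ uniformly lets one pass to the limit. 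Everything else is monotone and dominated convergence, so I do not expect any obstacle beyond correctly invoking this standard formula.
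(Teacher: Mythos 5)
Your proof is correct and rests on the same key ingredient as the paper's: the Fonseca--Gangbo change-of-variables/degree formula for continuous $W^{1,n}$ maps, combined with the constancy of $\deg(f,\cdot,U_\eps)$ on $\B^n(y_0,\eps)$ and the inclusion $f(\partial U_\eps)\subset\partial\B^n(y_0,\eps)$. The paper simply cites the version of the formula with $g\equiv 1$ (Proposition 5.25 and Remark 5.26(ii) there, using that $f(\partial U_\eps)$ has measure zero and that the degree vanishes off $\overline{\B^n(y_0,\eps)}$), whereas you derive the same identity from the test-function version by approximating the indicator of the ball; this is only a difference in presentation, not in substance.
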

\begin{proof}
	Since $f(\partial U_\eps) \subset \partial \B^n(y_0, \eps)$ which has measure zero, the claim follows e.g.\ from \cite[Proposition 5.25 and Remark 5.26 (ii)]{Fonseca-Gangbo-book}, which yield that
	\[
		\int_{U_{\eps}} J_f = \int_{\R^n} \deg (f, U_{\eps}, y) \dd y.
	\]
	Indeed, the integrand on the right hand side is $\deg(f, U_\eps)$ in $\B^n(y_0, \eps)$, and vanishes outside $\overline{\B^n(y_0, \eps)}$ due to e.g.\ \cite[Theorem 2.1]{Fonseca-Gangbo-book}.
\end{proof}

We point out the following useful property of pre-image sets that follows by a similar argument as Lemma \ref{lem:compact_preimages}

\begin{lemma}\label{lem:disjoint_preimages}
	Let $\Omega \subset \R^n$ be a connected domain, let $y_0 \in \R^n$, and let $f \colon \Omega \to \R^n$ be a continuous map such that $f^{-1}\{y_0\}$ is totally disconnected. Then for all $x_1, x_2 \in f^{-1}\{y_0\}$ such that $x_1 \neq x_2$, there exists $\eps > 0$ such that $x_1$ and $x_2$ are in different components of $f^{-1} \B^n(y_0, \eps)$.
\end{lemma}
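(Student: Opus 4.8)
The plan is to argue by contradiction, in the same spirit as the proof of Lemma~\ref{lem:compact_preimages}. Suppose no such $\eps$ exists, so that for every $\eps > 0$ the points $x_1$ and $x_2$ lie in one and the same component $U_\eps$ of $f^{-1}\B^n(y_0,\eps)$; note that this component is uniquely determined, being simultaneously the $x_1$-component and the $x_2$-component. The first observation is that these components are nested: if $0 < \eps' \leq \eps$, then $U_{\eps'}$ is a connected subset of $f^{-1}\B^n(y_0,\eps) \supseteq f^{-1}\B^n(y_0,\eps')$ containing $x_1$, and hence $U_{\eps'} \subseteq U_\eps$.

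To make the limiting argument clean I would apply Lemma~\ref{lem:compact_preimages} at the point $x_1$ to obtain $\eps_0 > 0$ for which the $x_1$-component of $f^{-1}\B^n(y_0,\eps_0)$, which is $U_{\eps_0}$, has $\overline{U_{\eps_0}}$ compact and contained in $\Omega$. Then $\overline{U_\eps} \subseteq \overline{U_{\eps_0}} \subseteq \Omega$ is compact for every $\eps \in (0,\eps_0]$, so $\{\overline{U_\eps}\}_{0 < \eps \leq \eps_0}$ is a descending family of nonempty compact connected sets, and its intersection $C = \bigcap_{0 < \eps \leq \eps_0} \overline{U_\eps}$ is nonempty, compact and connected; see e.g.\ \cite[Theorem~6.1.19]{Engelking_topology}. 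Moreover $C \subseteq \overline{U_{\eps_0}} \subseteq \Omega$.

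It remains to check that $C$ contradicts the total disconnectedness of $f^{-1}\{y_0\}$. On the one hand, $x_1, x_2 \in U_\eps \subseteq \overline{U_\eps}$ for every $\eps \in (0,\eps_0]$, so $\{x_1, x_2\} \subseteq C$ and $C$ has more than one point. On the other hand $C \subseteq f^{-1}\{y_0\}$: if $z \in C$, then for each $\eps \in (0,\eps_0]$ we have $z \in \overline{U_\eps} \subseteq \Omega$, and since $f$ is continuous there and $f(U_\eps) \subseteq \B^n(y_0,\eps)$, also $f(z) \in \overline{\B^n(y_0,\eps)}$; letting $\eps \to 0$ yields $f(z) = y_0$. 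Hence $C$ is a connected subset of $f^{-1}\{y_0\}$ with at least two points, which is the desired contradiction, and the lemma follows.

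The argument is elementary and short, and I do not expect a genuine obstacle; the one step that requires a little care is guaranteeing — via Lemma~\ref{lem:compact_preimages} — that the closures $\overline{U_\eps}$ remain compact and inside $\Omega$, since this is what lets the descending intersection $C$ inherit both compactness and connectedness. Alternatively one could run the nested-intersection argument directly, first reducing to a bounded subdomain containing $x_1$ exactly as in the proof of Lemma~\ref{lem:compact_preimages}, but routing through Lemma~\ref{lem:compact_preimages} keeps the topological bookkeeping cleanest.
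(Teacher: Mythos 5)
Your proposal is correct and follows essentially the same route as the paper's proof: argue by contradiction, use Lemma~\ref{lem:compact_preimages} to make the closures $\overline{U_\eps}$ compact in $\Omega$, and apply the nested-intersection theorem to obtain a compact connected set $C \subset f^{-1}\{y_0\}$ containing both $x_1$ and $x_2$, contradicting total disconnectedness. Your write-up merely spells out a few details (nestedness of the components, $C \subset f^{-1}\{y_0\}$) that the paper leaves implicit.
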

\begin{proof}
	Suppose to the contrary that for all $\eps > 0$, the $x_1$-component $U_\eps$ of $f^{-1} \B^n(y_0, \eps)$ also contains $x_2$. If $\eps$ is small enough, we have by Lemma \ref{lem:compact_preimages} that $\overline{U_\eps}$ is a compact connected subset of $\Omega$. Hence, if we again define $C = \bigcap_{\eps > 0} \overline{U_\eps}$, then $C$ is connected by \cite[Theorem 6.1.19]{Engelking_topology}, $C \subset f^{-1} \{y_0\}$, and $\{x_1, x_2\} \subset C$. This contradicts the fact that $f^{-1}\{y_0\}$ is totally disconnected.
\end{proof}

We also note that by Lemmas \ref{lem:compact_preimages} and \ref{lem:disjoint_preimages}, we can get the pre-image components to be as small in measure as we want.

\begin{cor}\label{cor:small_preimages}
		Let $\Omega \subset \R^n$ be a connected domain, let $y_0 \in \R^n$, and let $f \colon \Omega \to \R^n$ be a continuous map such that $f^{-1}\{y_0\}$ is totally disconnected. Let $x_0 \in f^{-1}\{y_0\}$, and for all $\eps > 0$, let $U_\eps$ denote the $x_0$-component of $f^{-1} \B^n(y_0, \eps)$. Then $\lim_{\eps \to 0^+} m_n(U_\eps) = 0$.
\end{cor}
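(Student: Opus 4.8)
The plan is to exploit that the sets $U_\eps$ form a nested family as $\eps \downarrow 0$ and to identify the intersection of their closures as the single point $x_0$. First I would observe that if $0 < \eps_1 < \eps_2$, then $f^{-1}\B^n(y_0,\eps_1) \subset f^{-1}\B^n(y_0,\eps_2)$, and since $U_{\eps_1}$ is a connected subset of the former containing $x_0$, it must be contained in the $x_0$-component $U_{\eps_2}$ of the latter; hence $\eps \mapsto U_\eps$, and thus $\eps \mapsto \overline{U_\eps}$, is monotone. By Lemma \ref{lem:compact_preimages} there is $\eps_0 > 0$ such that $\overline{U_{\eps_0}}$ is a compact subset of $\Omega$, so for all $0 < \eps \leq \eps_0$ the sets $\overline{U_\eps}$ are compact, connected, and of finite Lebesgue measure.

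Next I would consider $C = \bigcap_{0 < \eps \leq \eps_0} \overline{U_\eps}$. Just as in the proofs of Lemmas \ref{lem:compact_preimages} and \ref{lem:disjoint_preimages}, this is the intersection of a descending family of compact connected sets (it suffices to take $\eps = 1/k$), hence compact and connected by \cite[Theorem 6.1.19]{Engelking_topology}. Since $f$ is continuous and $f(\overline{U_\eps}) \subset \overline{\B^n(y_0,\eps)}$, every point of $C$ is mapped into $\bigcap_{\eps > 0} \overline{\B^n(y_0,\eps)} = \{y_0\}$, so $C \subset f^{-1}\{y_0\}$. As $x_0 \in C$ and $f^{-1}\{y_0\}$ is totally disconnected, the connected set $C$ must be the singleton $\{x_0\}$.

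Finally, since $\overline{U_{\eps_0}}$ has finite Lebesgue measure, continuity of Lebesgue measure from above applies to the descending family $\overline{U_\eps}$, giving
\[
	m_n(U_\eps) \leq m_n(\overline{U_\eps}) \xrightarrow[\eps \to 0^+]{} m_n(C) = m_n(\{x_0\}) = 0,
\]
which is the assertion. I do not expect any genuine obstacle here; the only two points requiring a moment's care are the monotonicity of $U_\eps$ in $\eps$, needed so that the $\overline{U_\eps}$ genuinely form a descending family, and the finiteness of $m_n(\overline{U_{\eps_0}})$, needed to invoke continuity of measure from above — both immediate consequences of Lemma \ref{lem:compact_preimages}.
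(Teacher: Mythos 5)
Your proof is correct and follows essentially the same route as the paper's: nestedness of the $U_\eps$, identification of the intersection as the singleton $\{x_0\}$, and continuity of Lebesgue measure from above on a descending family of finite measure. The only cosmetic difference is that you re-derive the fact that the intersection is $\{x_0\}$ via the descending-compact-connected-sets argument and total disconnectedness, whereas the paper simply cites Lemma \ref{lem:disjoint_preimages} for the same conclusion.
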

\begin{proof}
	The sets $U_\eps$ decrease as $\eps$ decreases. By Lemma \ref{lem:disjoint_preimages}, we have $\bigcap_{\eps > 0} U_\eps = \{x_0\}$, and by Lemma \ref{lem:compact_preimages} there is a $U_\eps$ with finite $m_n$-measure. Hence, basic convergence properties of measures imply the claim.
\end{proof}

\section{Positivity of the degree}\label{sec:value_degree}

\subsection{Negative values} Now, under the assumptions of Theorem \ref{thm:one_point_Reshetnyak}, if $f(x) = y_0$, then for small enough $\eps > 0$ we have a well-defined degree $\deg(f, U)$ for the $x$-component $U$ of $f^{-1} \B^n(y_0, \eps)$. Our next goal is to show that if $U$ is small enough, then this degree cannot be negative.

\begin{lemma}\label{lem:weakly_sense_preserving}
	Let $\Omega \subset \R^n$ be a bounded connected domain, let $y_0 \in \R^n$, and let $f \in W^{1,n}(\Omega, \R^n)$ be a non-constant map that satisfies \eqref{eq:qrvalue} for a.e.\ $x \in \Omega$, where $K \in [1, \infty)$ and $\qrval \in L^{p}(\Omega)$ for some $p > 1$. Suppose that $U$ is a non-empty component of $f^{-1}\B^n(y_0, \eps)$, where $\eps > 0$ is small enough that $\overline{U} \subset \Omega$. Then there exists $c = c(n, K, \qrval, \Omega) > 0$ such that $\deg(f, U) \geq 0$ if $m_n(U) < c$.
\end{lemma}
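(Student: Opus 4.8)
The plan is to run a local version of the argument used in the proof of the single-value Liouville Theorem~\ref{thm:Liouville_restated} in~\cite{Kangasniemi-Onninen_Heterogeneous}. Suppose toward a contradiction that $\deg(f, U) = d < 0$ for some component $U$ of $f^{-1}\B^n(y_0, \eps)$ with $\overline U \subset \Omega$ and $m_n(U)$ small. By Lemma~\ref{lem:sobolev_degree_formula} we have $\int_U J_f = \omega_n \eps^n d < 0$, so in particular the Jacobian has a substantial negative part on $U$. The idea is to use the test-function machinery: plug a suitable cutoff-type test function built from $\log\abs{f-y_0}$ (or a truncated power of $\abs{f-y_0}$) into a Caccioppoli-type estimate coming from~\eqref{eq:qrvalue}, integrate by parts, and use the degree formula together with $\int_U J_f < 0$ to force a contradiction once $m_n(U)$ is small enough. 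Concretely, I would follow the scheme of~\cite[Lemma~6.2]{Kangasniemi-Onninen_Heterogeneous} and~\cite{Onninen-Zhong_MFD-loglog-proof}: choose $\Psi(t) = t^{-\alpha}$ (or a truncation thereof) for a suitable small $\alpha > 0$, apply the integration-by-parts lemma to $f - y_0$ on $U$, which produces a gradient term $\int_U \abs{Df}^n \abs{f-y_0}^{-n}(\dots)$ controlled by a boundary/cutoff term plus a Jacobian term $\int_U J_f \abs{f-y_0}^{-n}(\dots)$.

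The key mechanism: on $U$ the boundary values are controlled since $f(\partial U) \subset \partial\B^n(y_0,\eps)$, so $\abs{f-y_0} = \eps$ on $\partial U$ and there is no genuine cutoff loss — the "cutoff" is the characteristic function of $U$ itself, which is why we need $\overline U$ compact in $\Omega$. Using~\eqref{eq:qrvalue} to absorb $\abs{Df}^n$, one is left with an inequality of the shape
\[
    0 \;\leq\; (\text{const})\int_U \frac{J_f}{\abs{f-y_0}^n}(\dots) \;+\; (\text{const})\int_U \frac{\qrval}{\abs{f-y_0}^{n-?}}(\dots),
\]
but the first term, after unwinding $\Psi$, should be expressible via $\deg(f,U)$ (through a coarea/degree identity analogous to Lemma~\ref{lem:sobolev_degree_formula} applied at all radii $\rho \le \eps$), and it carries the sign of $d < 0$. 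The remaining $\qrval$ term is where the hypothesis $\qrval \in L^p(\Omega)$, $p > 1$, enters: by H\"older it is bounded by $\norm{\qrval}_{L^p(\Omega)}$ times an $L^{p'}$-norm of a negative power of $\abs{f-y_0}$ over $U$, which by the distortion inequality and the higher integrability of $\qrval$ (cf.\ Lemma~\ref{lem:trunc_loglog_bound}) can be made small as $m_n(U) \to 0$. Thus for $m_n(U) < c(n,K,\qrval,\Omega)$ the $\qrval$-term cannot compensate the strictly negative degree term, giving the contradiction.

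The main obstacle I anticipate is making the degree-sign term genuinely usable: one must convert $\int_U J_f\,\abs{f-y_0}^{-n}\varphi(\abs{f-y_0})$ into something proportional to $\deg(f,U)$ with the correct strictly negative sign, uniformly in the regularizing parameter $\eps$ (the $\varphi_\eps$ of Lemma~\ref{lem:trunc_loglog_bound}) used to keep all integrals finite. This requires a pushforward/coarea argument: writing $\int_U J_f\, g(\abs{f-y_0}) = \int_0^\eps g(\rho)\,\frac{d}{d\rho}\!\big(\int_{U\cap\{|f-y_0|<\rho\}} J_f\big)\,d\rho$ and recognizing the inner integral, for a.e.\ $\rho$, as $\omega_n\rho^n \deg(f,U)$ by the degree formula, so the whole term equals $\deg(f,U)$ times a fixed positive constant depending only on $g$ and $n$. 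Care is needed because $\{|f-y_0|<\rho\}$ need not be connected and the degree formula applies to whole components, but summing over components and using that the degree at $y_0$ equals $\deg(f,U)$ for the relevant one handles this. Once the sign term is pinned down, the rest is the routine Young/H\"older bookkeeping already rehearsed in Section~\ref{sec:disconnected_preimage}, plus Corollary~\ref{cor:small_preimages} to say $m_n(U)$ really can be taken small.
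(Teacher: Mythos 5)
Your plan treats this lemma as if it required the full Liouville-type machinery (logarithmic test functions, the $\varphi_\eps$-regularized integration by parts, a coarea/degree identity at every radius $\rho<\eps$). That machinery is what the paper deploys for the genuinely hard case $\deg(f,U)=0$ (Lemmas \ref{lem:sense_preserving_level_set_integral}--\ref{lem:sense_preserving}); for ruling out \emph{negative} degree the paper's argument is essentially two lines, and you should see why. If $\deg(f,U)\le -1$, Lemma \ref{lem:sobolev_degree_formula} gives $\int_U J_f\le -\omega_n\eps^n$, hence $\int_U J_f^-\ge\omega_n\eps^n$. On the other hand, \eqref{eq:qrvalue} rewritten as $\abs{Df}^n+KJ_f^-\le KJ_f^+{}+\qrval\abs{f-y_0}^n$ gives the \emph{pointwise} bound $J_f^-\le K^{-1}\qrval\abs{f-y_0}^n$ (where $J_f^->0$ one has $J_f^+=0$), and on $U$ we have $\abs{f-y_0}<\eps$, so $\int_U J_f^-\le K^{-1}\eps^n\int_U\qrval\le K^{-1}\eps^n\,[m_n(U)]^{(p-1)/p}\norm{\qrval}_{L^p}$ by H\"older. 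The $\eps^n$ cancels and you read off $m_n(U)\ge(\omega_nK/\norm{\qrval}_{L^p})^{p/(p-1)}=:c$. No test functions, no weights, no absorption of $\abs{Df}^n$.

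Beyond being over-engineered, your plan as written has two unresolved gaps. First, the step converting $\int_U J_f\,g(\abs{f-y_0})$ into $\deg(f,U)$ times a positive constant requires splitting $J_f=J_f^+-J_f^-$ and applying Fubini to each part separately; for a weight $g(\rho)$ that is singular at $\rho=0$ (as $\rho^{-n}$ or $\rho^{-\alpha}$ would be) this is only legitimate if $\int_UJ_f^{\pm}\,g(\abs{f-y_0})<\infty$, which is not known a priori --- establishing exactly this finiteness is the content of the paper's Lemma \ref{lem:sense_preserving_log_gradient_int}, and there it is derived \emph{from} the zero-degree hypothesis, so you cannot invoke it here without circularity. (With a bounded weight the identity is fine, but the bounded-weight case $g\equiv1$ is precisely the elementary argument above.) Second, your claim that ``the cutoff is the characteristic function of $U$ itself'' with ``no genuine cutoff loss'' is not something the integration-by-parts lemmas you cite deliver: they require smooth compactly supported test functions, and replacing $\eta$ by $\chi_U$ is only justified through the boundary-value trick $\abs{f-y_0}\equiv\eps$ on $\partial U$ (the $-J_f^{\pm}/\eps^n$ correction terms in Lemma \ref{lem:sense_preserving_log_gradient_int}), which you gesture at but do not carry out. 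Also, the smallness of the $\qrval$-term has nothing to do with Lemma \ref{lem:trunc_loglog_bound}; it is just H\"older against $m_n(U)^{(p-1)/p}$, combined with Corollary \ref{cor:small_preimages} when the lemma is later applied.
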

\begin{proof}
	Suppose that $\deg(f, U) < 0$. Then $\deg(f, U) \leq -1$, and the Jacobian formula for the degree from Lemma \ref{lem:sobolev_degree_formula} yields that
	\[
		\int_{U_\eps} J_f \leq - \omega_n \eps^n. 
	\]
	In particular, if we use $J_f^+$ and $J_f^-$ to denote the positive and negative part of the Jacobian, respectively, then we have
	\[
		\int_{U_\eps} J_f^- \geq \omega_n \eps^n.
	\]
	However, the distortion inequality \eqref{eq:qrvalue} can be rewritten as $\abs{Df}^n + K J_f^- \leq K J_f^+ + \abs{f - y_0}^n \qrval$. Since $J_f^+$ vanishes where $J_f^- > 0$, we hence have $J_f^- \leq K^{-1} \abs{f - y_0}^n \qrval$. Now we may estimate
	\begin{multline*}
		\omega_n \eps^n \leq \int_{U_\eps} J_f^- \leq \int_{U_\eps} \frac{\abs{f - y_0}^n \qrval}{K}
		\leq \frac{\eps^n}{K} \int_{U_\eps} \qrval
		\leq \frac{\eps^n}{K} \left[m_n(U_\eps)\right]^\frac{p-1}{p} \norm{\qrval}_{L^p}.
	\end{multline*}
	Hence, we may only have a negative $\deg(f, U)$ if
	\[
		m_n(U_\eps) \geq \left( \omega_n K/ \norm{\qrval}_{L^p} \right)^\frac{p}{p-1},
	\]
	where this lower bound is interpreted as $\infty$ if $\qrval$ is identically zero.
\end{proof}

\subsection{Zero values}

After this relatively short argument that negative values of $\deg(f, U)$ are impossible for small $U$, we next wish to similarly exclude the possibility of $\deg(f, U) = 0$ for small $U$. The proof of this is far more complicated; however, the ideas are in fact essentially a local version of the proof of \cite[Theorem 1.2]{Kangasniemi-Onninen_Heterogeneous}, and we can thankfully skip most of the laborious parts by directly re-using several of the lemmas in \cite{Kangasniemi-Onninen_Heterogeneous}.

The main trick that lets us access the methods of \cite{Kangasniemi-Onninen_Heterogeneous} is the following lemma.

\begin{lemma}\label{lem:sense_preserving_level_set_integral}
	Let $\Omega \subset \R^n$ be a bounded connected domain, let $y_0 \in \R^n$, and let $f \in W^{1,n}(\Omega, \R^n)$ be a non-constant map that satisfies \eqref{eq:qrvalue} for a.e.\ $x \in \Omega$, where $K \in [1, \infty)$ and $\qrval \in L^p(\Omega)$ for some $p > 1$. Suppose that $U$ is a non-empty component of $f^{-1}\B^n(y_0, \eps)$, where $\eps > 0$ is small enough that $\overline{U} \subset \Omega$ and $m_n(U) < c$ with $c = c(n, K, \qrval, \Omega) > 0$ given by Lemma \ref{lem:weakly_sense_preserving}. If $\deg(f, U) = 0$, then for every $r \in (0, \eps)$ we have
	\[
		\int_{U \cap \{x \in \R^n : \abs{f - y_0} < r\}} J_f = 0. 
	\]
\end{lemma}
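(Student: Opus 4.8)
The plan is to exploit the fact that, for a component $U$ of $f^{-1}\B^n(y_0,\eps)$ and a radius $r \in (0,\eps)$, the intersection $U \cap \{|f-y_0|<r\}$ is an open set whose components are exactly the components of $f^{-1}\B^n(y_0,r)$ that lie inside $U$. Call these components $V_1, V_2, \dots$. Each $\overline{V_j}$ is a compact subset of $\Omega$ (being contained in $\overline U \subset \Omega$), and each $V_j$ has measure at most $m_n(U) < c$, so Lemma \ref{lem:weakly_sense_preserving} applies to each $V_j$ and gives $\deg(f, V_j) \geq 0$. By the Jacobian formula of Lemma \ref{lem:sobolev_degree_formula}, $\int_{V_j} J_f = \omega_n r^n \deg(f, V_j) \geq 0$. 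Summing over $j$ (and using countable additivity of the integral, since the $V_j$ are disjoint open sets exhausting $U \cap \{|f-y_0|<r\}$ up to the measure-zero set where $|f-y_0|=r$ is attained on $\partial V_j$) gives
\[
	\int_{U \cap \{|f - y_0| < r\}} J_f = \sum_j \int_{V_j} J_f \geq 0.
\]
So the integral is automatically $\geq 0$; the entire content of the lemma is that it is not $> 0$.

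For the reverse inequality I would use $\deg(f, U) = 0$ together with the analogous decomposition of $U$ itself. First, $\int_U J_f = \omega_n \eps^n \deg(f, U) = 0$ by Lemma \ref{lem:sobolev_degree_formula}. Now split $U$ along the level $r$: write $U = (U \cap \{|f-y_0|<r\}) \cup (U \cap \{|f-y_0|=r\}) \cup (U \cap \{|f-y_0|>r\})$, where the middle set has measure zero (the derivative $Df$ vanishes a.e.\ on a level set of $|f-y_0|$, or one can simply note that $J_f = 0$ a.e.\ there). Thus
\[
	0 = \int_U J_f = \int_{U \cap \{|f-y_0|<r\}} J_f + \int_{U \cap \{|f-y_0|>r\}} J_f.
\]
The first summand is $\geq 0$ by the previous paragraph, so it suffices to show the second summand is also $\geq 0$. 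For that, I would apply the same reasoning to the set $U \cap \{|f-y_0|>r\}$: it is open, its components are components of $f^{-1}(\B^n(y_0,\eps)\setminus \overline{\B^n(y_0,r)})$, but now one needs a degree-type positivity statement for these annular-preimage components, which does not come directly from Lemma \ref{lem:weakly_sense_preserving}.

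The cleanest route around this is to instead run the level-set argument at the \emph{outer} boundary: for $\rho \in (r, \eps)$, decompose $U$ using the level $\rho$ and let $\rho \uparrow \eps$. Concretely, I expect the intended argument is monotonicity in $r$. Define $g(r) = \int_{U \cap \{|f-y_0|<r\}} J_f$ for $r \in (0,\eps]$, which by the first paragraph is a nonnegative, nondecreasing function of $r$ (nondecreasing because enlarging $r$ adds the integral of $J_f$ over the annular shell $U \cap \{r \le |f-y_0| < r'\}$, and that shell is a union of components of $f^{-1}$ of a ball of radius $r'$, each of measure $< c$, hence contributing nonnegatively — wait, this needs the shell components to be ball-preimage components, which they are once we take them as components of $f^{-1}\B^n(y_0, r')$ meeting the shell). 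Then $g$ is nonnegative, nondecreasing, and $g(\eps) = \omega_n \eps^n \deg(f,U) = 0$; hence $g \equiv 0$ on $(0,\eps)$, which is exactly the claim.

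The main obstacle is making the decomposition-and-summation rigorous: one must check that $U \cap \{|f-y_0|<r\}$ decomposes into countably many components each of which is genuinely a component of $f^{-1}\B^n(y_0,r)$ with compact closure in $\Omega$ and measure below the threshold $c$, so that Lemmas \ref{lem:weakly_sense_preserving} and \ref{lem:sobolev_degree_formula} both apply; and that the level sets $\{|f-y_0|=r\}$ are $J_f$-null so the pieces fit together additively. All of this is routine once phrased correctly, but the bookkeeping — especially the monotonicity step, i.e.\ that passing from radius $r$ to a larger radius $r'$ only adds nonnegative contributions — is where I would be most careful, since it is the step that actually uses $\deg(f,U)=0$ in an essential way.
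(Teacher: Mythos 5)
Your first step --- decomposing $U \cap \{\abs{f-y_0}<r\}$ into components $V_j$ of $f^{-1}\B^n(y_0,r)$, applying Lemma \ref{lem:weakly_sense_preserving} and Lemma \ref{lem:sobolev_degree_formula} to each to get $\int_{V_j} J_f = \omega_n r^n \deg(f,V_j) \geq 0$ --- is correct and is exactly how the paper begins. (The decomposition is in fact exact as a disjoint union of open sets, so no measure-zero correction on $\{\abs{f-y_0}=r\}$ is needed.) The gap is in how you close the argument. Your monotonicity claim for $g(r)=\int_{U\cap\{\abs{f-y_0}<r\}}J_f$ rests on the assertion that the shell $U \cap \{r \le \abs{f-y_0} < r'\}$ is a union of components of $f^{-1}\B^n(y_0,r')$; it is not. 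Each component $W_k$ of $f^{-1}\B^n(y_0,r')$ that meets the shell also extends into the region $\{\abs{f-y_0}<r\}$, so $\int$ over the full component is not $\int$ over the shell piece, and the difference $g(r')-g(r)=\sum_k\bigl(\int_{W_k}J_f-\sum_{j\colon V_j\subset W_k}\int_{V_j}J_f\bigr)$ has no sign from what you have established. The same obstruction blocks your other route via $U\cap\{\abs{f-y_0}>r\}$, as you yourself note.

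The missing ingredient is the additivity (excision) property of the topological degree: for $y \in \B^n(y_0,r)$ one has $f^{-1}\{y\}\cap U \subset \bigcup_j V_j$, hence $\deg(f,U)=\deg(f,y,U)=\sum_j \deg(f,y,V_j)=\sum_j\deg(f,V_j)$ (the paper cites \cite[Theorem 2.7]{Fonseca-Gangbo-book} for this). Once you have this, the hypothesis $\deg(f,U)=0$ together with $\deg(f,V_j)\geq 0$ forces $\deg(f,V_j)=0$ for every $j$, so each $\int_{V_j}J_f$ vanishes by Lemma \ref{lem:sobolev_degree_formula} and the claim follows by summation --- no splitting of $U$ at the level $r$ and no monotonicity argument are needed. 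Incidentally, this same identity is what would make your $g$ monotone, since it gives $g(r')-g(r)=\omega_n((r')^n-r^n)\sum_k\deg(f,W_k)\geq 0$; but at that point the direct argument is shorter.
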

\begin{proof}
	The set $U \cap \{x \in \R^n : \abs{f - y_0} < r\}$ is a disjoint union of components of $f^{-1}\B^n(y_0, r)$. We denote these components by $U_i$, where $i \in I$. Since $U_i \subset U$ and since $m_n(U) < c$, we have by Lemma \ref{lem:weakly_sense_preserving} that $\deg(f, U_i) \geq 0$ for every $i \in I$. Moreover, by using both parts of \cite[Theorem 2.7]{Fonseca-Gangbo-book}, we also have
	\[
		\sum_i \deg(f, U_i) = \deg(f, U) = 0.
	\]
	Hence, for every $i \in I$ we have $\deg(f, U_i) = 0$, and therefore by Lemma \ref{lem:sobolev_degree_formula} the integral of $J_f$ over every $U_i$ vanishes. The claim hence follows.
\end{proof}

The key part about Lemma \ref{lem:sense_preserving_level_set_integral} is that it allows us access to the argument of \cite[Lemma 5.3]{Kangasniemi-Onninen_Heterogeneous}. Since the original statement is only for globally defined $f \colon \R^n \to \R^n$, we recall the argument here.

\begin{lemma}\label{lem:sense_preserving_log_gradient_int}
	Let $\Omega \subset \R^n$ be a bounded connected domain, let $y_0 \in \R^n$, and let $f \in W^{1,n}(\Omega, \R^n)$ be a non-constant map that satisfies \eqref{eq:qrvalue} for a.e.\ $x \in \Omega$, where $K \in [1, \infty)$ and $\qrval \in L^p(\Omega)$ for some $p > 1$. Suppose that $U$ is a non-empty component of $f^{-1}\B^n(y_0, \eps)$, where $\eps > 0$ is small enough that $\overline{U} \subset \Omega$ and $m_n(U) < c$ with $c = c(n, K, \qrval, \Omega) > 0$ given by Lemma \ref{lem:weakly_sense_preserving}. If $\deg(f, U) = 0$, then we have
	\[
		\int_{U} \frac{\abs{Df}^n}{\abs{f-y_0}^n} < \infty
		\qquad \text{and} \qquad 
		\int_{U} \frac{J_f}{\abs{f-y_0}^n} = 0.
	\]
\end{lemma}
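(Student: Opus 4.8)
The plan is to derive both identities from the two facts collected just above: that $\int_U J_f = 0$ (Lemma~\ref{lem:sobolev_degree_formula} together with $\deg(f,U)=0$) and that $\int_{U\cap\{\abs{f-y_0}<r\}}J_f=0$ for every $r\in(0,\eps)$ (Lemma~\ref{lem:sense_preserving_level_set_integral}). The mechanism connecting these to the singular weight $\abs{f-y_0}^{-n}$ is a layer-cake decomposition: after translating so that $y_0=0$ and writing $\phi=\abs f$, we have $\{\phi=0\}=f^{-1}\{y_0\}\cap U$ of Lebesgue measure zero by Lemma~\ref{lem:fiber_hausd_measure} (since $f$ is non-constant), and for a.e.\ $x\in U$, where $0<\phi(x)<\eps$, the elementary identity
\[
\phi(x)^{-n}=\eps^{-n}+n\int_0^\eps t^{-n-1}\mathbf{1}_{\{\phi(x)<t\}}\,\dd t
\]
holds. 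Also $\abs{Df}^n\in L^1(U)$ and $J_f\in L^1(U)$, since $f\in W^{1,n}(\Omega,\R^n)$ and $\overline U\subset\Omega$ is compact.

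For the first identity I would argue by truncation. Fix $\delta\in(0,\eps)$ and put $U_\delta=U\cap\{\phi\ge\delta\}$; on $U_\delta$ the weight $\phi^{-n}$ is bounded by $\delta^{-n}$, so $J_f\phi^{-n}\in L^1(U_\delta)$ and Fubini applied to the layer-cake identity gives
\[
\int_{U_\delta}\frac{J_f}{\phi^n}=\eps^{-n}\int_{U_\delta}J_f+n\int_\delta^\eps t^{-n-1}\Bigl(\int_{U_\delta\cap\{\phi<t\}}J_f\Bigr)\dd t.
\]
Writing $\int_{U_\delta}J_f=\int_UJ_f-\int_{U\cap\{\phi<\delta\}}J_f$ and $\int_{U_\delta\cap\{\phi<t\}}J_f=\int_{U\cap\{\phi<t\}}J_f-\int_{U\cap\{\phi<\delta\}}J_f$, both of which vanish, one obtains $\int_{U_\delta}J_f\phi^{-n}=0$. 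Dividing the distortion inequality~\eqref{eq:qrvalue} by $\phi^n\ge\delta^n>0$ on $U_\delta$ and integrating then yields
\[
\int_{U_\delta}\frac{\abs{Df}^n}{\phi^n}\le K\int_{U_\delta}\frac{J_f}{\phi^n}+\int_{U_\delta}\qrval=\int_{U_\delta}\qrval\le\int_U\abs{\qrval}\le\norm{\qrval}_{L^p(\Omega)}\,m_n(U)^{1-1/p},
\]
a bound independent of $\delta$. Since $U_\delta$ increases to $U\setminus f^{-1}\{y_0\}$, which is $U$ up to a null set, monotone convergence gives $\int_U\abs{Df}^n\phi^{-n}<\infty$.

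For the second identity, the first one combined with the pointwise bound $\abs{J_f}\le\abs{Df}^n$ shows $J_f\phi^{-n}\in L^1(U)$, so Fubini is now legitimate on all of $U$:
\[
\int_U\frac{J_f}{\phi^n}=\eps^{-n}\int_UJ_f+n\int_0^\eps t^{-n-1}\Bigl(\int_{U\cap\{\phi<t\}}J_f\Bigr)\dd t=0,
\]
because $\int_UJ_f=0$ and $\int_{U\cap\{\phi<t\}}J_f=0$ for every $t\in(0,\eps)$. Undoing the translation finishes the proof.

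The main obstacle is the finiteness $\int_U\abs{Df}^n\abs{f-y_0}^{-n}<\infty$. One cannot integrate~\eqref{eq:qrvalue} against $\abs{f-y_0}^{-n}$ directly, the weight being non-integrable along the fibre, so the truncation at level $\delta$ is essential, and the price is having to verify that the truncated Jacobian integral $\int_{U\cap\{\abs{f-y_0}\ge\delta\}}J_f\abs{f-y_0}^{-n}$ still vanishes. This is exactly where the full force of Lemma~\ref{lem:sense_preserving_level_set_integral} is used (vanishing of the $J_f$-integral over \emph{all} sublevel sets, not merely over $U$), and where the Fubini bookkeeping needs the $\delta$-truncation to make the integrand summable. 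Everything downstream of the $\delta$-uniform bound --- the monotone convergence step and the final application of Fubini once summability is known --- is routine.
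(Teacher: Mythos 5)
Your proof is correct, and it rests on the same two pillars as the paper's: the vanishing of $\int_{U\cap\{\abs{f-y_0}<r\}}J_f$ for all $r\in(0,\eps)$ from Lemma~\ref{lem:sense_preserving_level_set_integral}, and the layer-cake representation of $\abs{f-y_0}^{-n}$ (which the paper phrases as multiplying by $nr^{-n-1}$ and integrating in $r$). The difference is purely in the bookkeeping and in the order of the two conclusions. The paper first notes that where $J_f^->0$ the inequality \eqref{eq:qrvalue} forces $J_f^-\le K^{-1}\abs{f-y_0}^n\qrval$, so $\int_U J_f^-\abs{f-y_0}^{-n}<\infty$ outright; it then runs the layer-cake/Fubini--Tonelli argument separately on the non-negative functions $J_f^+$ and $J_f^-$ (so no prior summability is needed) to get $\int_U J_f^+\abs{f-y_0}^{-n}=\int_U J_f^-\abs{f-y_0}^{-n}<\infty$, and only afterwards deduces the finiteness of $\int_U\abs{Df}^n\abs{f-y_0}^{-n}$ from \eqref{eq:qrvalue}. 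You instead keep $J_f$ whole, truncate the domain to $U_\delta=U\cap\{\abs{f-y_0}\ge\delta\}$ to legitimize Fubini, obtain the $\delta$-uniform bound $\int_{U_\delta}\abs{Df}^n\abs{f-y_0}^{-n}\le\int_U\abs{\qrval}$ first, and pass to the limit by monotone convergence. Both routes are sound; the paper's positive/negative splitting avoids the $\delta$-limit, while your truncation avoids the splitting. One small point worth making explicit in your write-up is that the monotone convergence step needs $m_n(f^{-1}\{y_0\}\cap U)=0$, which you correctly source from Lemma~\ref{lem:fiber_hausd_measure} via $\cH^1$-nullity.
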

\begin{proof}
	Similarly as in the proof of Lemma \ref{lem:weakly_sense_preserving}, the equation \eqref{eq:qrvalue} implies that
	\[
		\int_U \frac{J_f^-}{\abs{f-y_0}^n} \leq \int_U \frac{\qrval}{K} < \infty.
	\]
	If we denote $U_r = U \cap f^{-1} \B^n(y_0, r)$ for all $r \in (0, \eps)$, then Lemma \ref{lem:sense_preserving_level_set_integral} yields that
	\[
		\int_{U_r} J_f^+ = \int_{U_r} J_f^-.
	\]
	By multiplying the above equation by $nr^{-n-1}$ and integrating, we get
	\[
		\int_0^\eps nr^{-n-1} \int_{U_r} J_f^+(x) \dd x \dd r = 
		\int_0^\eps nr^{-n-1} \int_{U_r} J_f^-(x) \dd x \dd r.
	\]
	Switching the order of integrals with Fubini-Tonelli yields
	\[
		\int_{U} J_f^+(x) \int_{\abs{f(x) - y_0}}^\eps nr^{-n-1} \dd r \dd x
		= \int_{U} J_f^-(x) \int_{\abs{f(x) - y_0}}^\eps nr^{-n-1} \dd r \dd x.
	\]
	By evaluating the inner integral, we have
	\[
		\int_{U} \left( \frac{J_f^+}{\abs{f-y_0}^n} - \frac{J_f^+}{\eps^n}\right)
		= \int_{U} \left( \frac{J_f^-}{\abs{f-y_0}^n} - \frac{J_f^-}{\eps^n}\right),
	\]
	and again using the fact that the integral of $J_f$ over $U$ is 0 yields
	\[
		\int_{U} \frac{J_f^+}{\abs{f-y_0}^n} = \int_{U} \frac{J_f^-}{\abs{f-y_0}^n} < \infty.
	\]
	We hence conclude that $J_f/\abs{f - y_0}^n$ is integrable over $U$ and has zero integral. Then \eqref{eq:qrvalue} immediately gives that $\abs{Df}^n/\abs{f - y_0}^n$ is integrable over $U$.
\end{proof}

We have now essentially merged with the proof of \cite[Theorem 1.2]{Kangasniemi-Onninen_Heterogeneous}. We now assemble the remaining pieces of the proof.

\begin{lemma}\label{lem:sense_preserving}
	Let $\Omega \subset \R^n$ be a bounded connected domain, let $y_0 \in \R^n$, and let $f \in W^{1,n}(\Omega, \R^n)$ be a non-constant map that satisfies \eqref{eq:qrvalue} for a.e.\ $x \in \Omega$, where $K \in [1, \infty)$ and $\qrval \in L^p(\Omega)$ for some $p > 1$. Suppose that $x_0 \in f^{-1}\{y_0\}$ and that $U$ is the $x_0$-component of $f^{-1}\B^n(y_0, \eps)$, where $\eps > 0$ is small enough that $\overline{U} \subset \Omega$ and $m_n(U) < c$ with $c = c(n, K, \qrval, \Omega) > 0$ given by Lemma \ref{lem:weakly_sense_preserving}. Then $\deg(f, U) > 0$.
\end{lemma}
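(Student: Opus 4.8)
The plan is to combine Lemma~\ref{lem:weakly_sense_preserving} with a contradiction argument resting on Lemma~\ref{lem:sense_preserving_log_gradient_int}, re-running the local-boundedness part of the proof of \cite[Theorem~1.2]{Kangasniemi-Onninen_Heterogeneous}. Since $m_n(U) < c$, Lemma~\ref{lem:weakly_sense_preserving} already gives $\deg(f, U) \ge 0$, so it suffices to exclude $\deg(f, U) = 0$. Assume towards a contradiction that it holds. Then Lemma~\ref{lem:sense_preserving_log_gradient_int} yields
\[
	\int_U \frac{\abs{Df}^n}{\abs{f - y_0}^n} < \infty
	\qquad \text{and} \qquad
	\int_U \frac{J_f}{\abs{f - y_0}^n} = 0 .
\]
Moreover $f$ is not identically $y_0$ on $U$: as $f$ is non-constant, $f^{-1}\{y_0\}$ is totally disconnected by Lemma~\ref{lem:fiber_hausd_measure}, whereas $U$ is a non-empty open set, so $U \not\subset f^{-1}\{y_0\}$. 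In particular $f^{-1}\{y_0\}$ is a Lebesgue-null set, so $w := \log\bigl(\eps\,\abs{f - y_0}^{-1}\bigr)$ is positive and finite a.e.\ on $U$.

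Next I would verify that $w$ plays the role of a nonnegative subsolution of a perturbed $n$-harmonic inequality on $U$. Since $\abs{\nabla w} \le \abs{Df}/\abs{f - y_0}$ a.e., the first estimate above gives $\int_U \abs{\nabla w}^n < \infty$; the truncations $\min(w, k)$ then have $k$-uniformly bounded $n$-energy, so Lemma~\ref{lem:loglog_sobolev} upgrades this to $w \in W^{1,n}_\loc(U)$. The analytic input, imported from \cite{Kangasniemi-Onninen_Heterogeneous}, is the pointwise identity $J_f\,\abs{f - y_0}^{-n}\,\dd x_1 \wedge \cdots \wedge \dd x_n = f^*\bigl( \abs{y - y_0}^{-n}\,\dd y_1 \wedge \cdots \wedge \dd y_n \bigr)$ together with the fact that $\abs{y - y_0}^{-n}\,\dd y_1 \wedge \cdots \wedge \dd y_n$ is an \emph{exact} $n$-form on $\R^n \setminus \{y_0\}$, with primitive $\log\abs{y - y_0}$ times the pullback of the volume form $\sigma$ of $\S^{n-1}$ under $y \mapsto (y - y_0)/\abs{y - y_0}$. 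Pairing \eqref{eq:qrvalue} with $\eta^n$ for $\eta \in C^\infty_0(U, [0, 1])$, integrating the Jacobian term by parts via this primitive, estimating the pullback of $\sigma$ by $C(n)\,\abs{Df}^{n-1}\,\abs{f - y_0}^{-(n-1)}$, and applying Young's inequality to absorb, one obtains a Caccioppoli-type estimate
\[
	\int_U \eta^n \abs{\nabla w}^n \le C(n, K) \int_U \abs{\nabla \eta}^n\, w^n + C(n, K) \int_U \eta^n\, \qrval .
\]
This is exactly the situation reached in the proof of \cite[Theorem~1.2]{Kangasniemi-Onninen_Heterogeneous}, and I would cite the corresponding lemmas there rather than reproduce the computation.

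From this Caccioppoli estimate, the local-boundedness part of De Giorgi--Nash--Moser theory for subsolutions of the $n$-Laplacian with right-hand side in $L^p$, $p > 1$ — carried out via Moser iteration in \cite{Kangasniemi-Onninen_Heterogeneous} — shows that $w$ is bounded above on every compact subset of $U$; the hypothesis $p > 1$ enters precisely here, being the threshold that makes the perturbation $\qrval$ admissible (cf.\ Example~\ref{ex:higher_int_needed}). In particular $w$ is finite at $x_0$, which contradicts $x_0 \in f^{-1}\{y_0\}$, i.e.\ $w(x_0) = +\infty$. Hence $\deg(f, U) \ne 0$, and together with $\deg(f, U) \ge 0$ from Lemma~\ref{lem:weakly_sense_preserving} we conclude $\deg(f, U) > 0$. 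I expect the main obstacle to be the analytic core just outlined: justifying the integration by parts of the Jacobian term in the Sobolev category (using that $f^{-1}\{y_0\}$ has zero $n$-capacity) and running the Moser iteration against the $L^p$ perturbation. Both are nontrivial, but since Lemma~\ref{lem:sense_preserving_log_gradient_int} reproduces the relevant hypotheses of \cite{Kangasniemi-Onninen_Heterogeneous} verbatim on $U$, they can be taken over directly from that paper.
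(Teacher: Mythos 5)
Your overall skeleton matches the paper's: reduce to excluding $\deg(f,U)=0$ via Lemma \ref{lem:weakly_sense_preserving}, feed the counterassumption into Lemma \ref{lem:sense_preserving_log_gradient_int}, and derive a contradiction by showing $\log\abs{f-y_0}$ is locally finite in $U$ while it must be $-\infty$ at $x_0$. The gap is in the analytic core, i.e.\ exactly the step you flag as the ``main obstacle.''

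The problem is the final implication ``Caccioppoli estimate $\Rightarrow$ $w$ locally bounded above via De Giorgi--Nash--Moser.'' First, a single Caccioppoli inequality for $w$ itself is far from sufficient for Moser iteration; one needs the full family of energy inequalities for truncations or powers of $w$, which requires $w$ to be a genuine weak subsolution of an $\mathcal{A}$-harmonic equation. You have not established that, and in the model case it is false in the direction you need: for a quasiregular $f$, the function $-\log\abs{f-y_0}$ has \emph{supersolution}-type structure across $f^{-1}\{y_0\}$ (the model is the fundamental solution $\log(1/\abs{x-x_0})$ of the $n$-Laplacian), and supersolutions are precisely the objects that are allowed to blow up to $+\infty$ at points. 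Second, even granting your Caccioppoli estimate together with $w\in W^{1,n}_\loc(U)$, local boundedness does not follow: $w(x)=\log\log(e/\abs{x-x_0})$ is nonnegative, lies in $W^{1,n}$ near $x_0$, satisfies an inequality of the stated form (the right-hand side $\int\abs{\nabla\eta}^n w^n$ dwarfs the left-hand side on small balls), and is unbounded. A sanity check confirms something must be missing: your Caccioppoli estimate can be derived without the hypothesis $\deg(f,U)=0$ (compare Lemma \ref{lem:trunc_loglog_bound}), so if it implied boundedness of $w$ one could conclude $y_0\notin f(\Omega)$ for every quasiregular map, contradicting Reshetnyak's theorem itself.

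The paper's mechanism is different and is where the degree-zero information does its work. From Lemma \ref{lem:sense_preserving_level_set_integral} one gets vanishing of $\int J_f/\abs{f-y_0}^n$ not just on $U$ but on every ball, up to a boundary term: the isoperimetric-type sphere inequality \cite[Lemma 6.1]{Kangasniemi-Onninen_Heterogeneous} gives
\[
\int_{\B^n(z,r)}\frac{J_f}{\abs{f-y_0}^n}\le C_n r\int_{\partial\B^n(z,r)}\frac{\abs{Df}^n}{\abs{f-y_0}^n},
\]
which combined with \eqref{eq:qrvalue} and the iteration lemma \cite[Lemma 3.2]{Kangasniemi-Onninen_Heterogeneous} yields a Morrey decay estimate $\int_{\B^n(z,r)}\abs{Df}^n/\abs{f-y_0}^n\le Cr^\alpha$; Morrey's embedding \cite[Lemma 3.3]{Kangasniemi-Onninen_Heterogeneous} then makes $\log\abs{f-y_0}$ locally H\"older continuous in $U$, giving the contradiction at $x_0$. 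Your sketch contains no substitute for this decay step, so as written the proof does not close.
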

\begin{proof}
	We suppose towards contradiction that $\deg(f, U) = 0$. Lemma \ref{lem:sense_preserving_log_gradient_int} then implies that $\abs{Df}^n/\abs{f-y_0}^n$ is integrable over $U$. We may hence apply \cite[Lemma 5.4]{Kangasniemi-Onninen_Heterogeneous} on $f - y_0$ to get that
	\[
		\log \abs{f - y_0} \in W^{1,n}_\loc(U).
	\]
	Similarly, an application of \cite[Lemma 6.1]{Kangasniemi-Onninen_Heterogeneous} on $f - y_0$ yields that for every $z \in U$ and a.e.\ $r \in (0, d(z, \partial U))$, we have
	\[
		\int_{\B^n(z, r)} \frac{J_f}{\abs{f - y_0}^n} \leq C_n r 	\int_{\partial \B^n(z, r)}\frac{\abs{Df}^n}{\abs{f - y_0}^n}.
	\]
	Now we use the previous estimate along with \eqref{eq:qrvalue} and H\"older's inequality to obtain
	\begin{multline*}
		\int_{\B^n(z, r)} \frac{\abs{Df}^n}{\abs{f-y_0}^n}
		\leq C_n K r \int_{\partial \B^n(z, r)} 
			\frac{\abs{Df}^n}{\abs{f-y_0}^n}
			+ \int_{\B^n(z, r)} \qrval\\
		\leq C_n K r \int_{\partial \B^n(z, r)} 
			\frac{\abs{Df}^n}{\abs{f-y_0}^n}
			+ \omega_n^\frac{p-1}{p} r^\frac{n(p-1)}{p} 
			\left(\int_{\Omega} \qrval^p\right)^\frac{1}{p}.
	\end{multline*}
	
	The above estimate lets us use \cite[Lemma 3.2]{Kangasniemi-Onninen_Heterogeneous} to obtain that
	\[
		\int_{\B^n(z, r)} \frac{\abs{Df}^n}{\abs{f-y_0}^n}
		\leq C r^\alpha,
	\]
	where $C > 0$ and $\alpha>0$ are independent of the choice of $z \in U$ and $r \in (0, d(z, \partial U))$. However, since we have $\abs{\nabla \log \abs{f - y_0}} \leq \abs{Df} / \abs{f - y_0}$, we now have a decay estimate
	\[
		\int_{\B^n(z, r)} \abs{\nabla \log \abs{f - y_0}}^n
		\leq C r^\alpha
	\]
	Hence, \cite[Lemma 3.3]{Kangasniemi-Onninen_Heterogeneous} yields that $\log \abs{f - y_0}$ is locally H\"older-continuous in $U$. This is a contradiction, since $x_0 \in U$ and $\lim_{x \to x_0} \log \abs{f(x) - y_0} = -\infty$. This concludes the proof.
\end{proof}

\section{The single value Reshetnyak's theorem}\label{sec:finishing_proof}

With Lemmas \ref{lem:fiber_hausd_measure} and \ref{lem:sense_preserving}, we now have the essential ingredients for the proof of Theorem \ref{thm:one_point_Reshetnyak}. We begin by proving discreteness, which we require in order to give a definition of the local index.

\begin{lemma}\label{lem:discreteness}
	Let $\Omega \subset \R^n$ be a connected domain, let $y_0 \in \R^n$, and let $f \in W^{1,n}_\loc(\Omega, \R^n)$ be a non-constant map that satisfies \eqref{eq:qrvalue} for a.e.\ $x \in \Omega$, where $K \in [1, \infty)$ and $\qrval \in L^p_\loc(\Omega)$ for some $p > 1$. Then $f^{-1} \{y_0\}$ is a discrete subset of $\Omega$.
\end{lemma}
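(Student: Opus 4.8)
The plan is to reduce the global statement to the local degree-theoretic picture established in Sections \ref{sec:disconnected_preimage}--\ref{sec:value_degree}, and then argue by contradiction using the fact that a nonzero-degree component of a small preimage cannot be ``pinched'' to a point. First I would note that by Lemma \ref{lem:fiber_hausd_measure} the set $f^{-1}\{y_0\}$ is totally disconnected, so by Lemma \ref{lem:compact_preimages} every $x_0 \in f^{-1}\{y_0\}$ has a neighborhood basis of the form $U_\eps$, the $x_0$-component of $f^{-1}\B^n(y_0,\eps)$, with $\overline{U_\eps}$ a compact subset of $\Omega$. To apply the bounded-domain lemmas \ref{lem:weakly_sense_preserving} and \ref{lem:sense_preserving}, I would fix a bounded subdomain $\Omega' \Subset \Omega$ containing $\overline{U_{\eps_0}}$ for some initial $\eps_0$, so that $f|_{\Omega'} \in W^{1,n}(\Omega',\R^n)$ and $\qrval \in L^p(\Omega')$; then the constant $c = c(n,K,\qrval,\Omega')$ of Lemma \ref{lem:weakly_sense_preserving} is available. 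By Corollary \ref{cor:small_preimages}, $m_n(U_\eps) \to 0$ as $\eps \to 0^+$, so for all sufficiently small $\eps$ we have $m_n(U_\eps) < c$ and Lemma \ref{lem:sense_preserving} applies, giving $\deg(f, U_\eps) > 0$.

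Now suppose towards contradiction that $f^{-1}\{y_0\}$ is not discrete, so some $x_0 \in f^{-1}\{y_0\}$ is an accumulation point of $f^{-1}\{y_0\}$; pick a sequence $x_j \in f^{-1}\{y_0\} \setminus \{x_0\}$ with $x_j \to x_0$. Fix $\eps$ small enough that $U_\eps$ is as above with $\deg(f,U_\eps) > 0$. Since $x_j \to x_0$ and $U_\eps$ is an open neighborhood of $x_0$, all but finitely many $x_j$ lie in $U_\eps$, and each such $x_j$ lies in its own $x_j$-component $V_j$ of $f^{-1}\B^n(y_0,\eps')$ for $\eps' \leq \eps$. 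The key mechanism is to choose, for each $j$ (beyond a point), a radius $\eps_j \in (0,\eps)$ small enough that (i) $m_n(V_j) < c$ for the $x_j$-component $V_j$ of $f^{-1}\B^n(y_0,\eps_j)$, (ii) $\overline{V_j} \Subset \Omega'$, and (iii) by Lemma \ref{lem:disjoint_preimages}, $V_j$ is disjoint from $V_k$ for $k < j$ and does not contain $x_0$. Then Lemma \ref{lem:sense_preserving} gives $\deg(f,V_j) > 0$ for each such $V_j$, while $V_j \subset U_\eps$. Applying the additivity of degree over components — i.e.\ \cite[Theorem 2.7]{Fonseca-Gangbo-book} as used in the proof of Lemma \ref{lem:sense_preserving_level_set_integral} — to the (finitely or countably many) disjoint components of $U_\eps \cap f^{-1}\B^n(y_0,\eps_j)$ for a fixed small $\eps_j$, together with Lemma \ref{lem:weakly_sense_preserving} giving nonnegativity of each term, one obtains $\deg(f,U_\eps) \geq \sum_{x \in f^{-1}\{y_0\}\cap U_\eps} \deg(f, W_x) \geq \#\{x_j : x_j \in U_\eps\}$, where $W_x$ is the $x$-component of $f^{-1}\B^n(y_0,\eps_j)$. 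Since $\deg(f,U_\eps)$ is a fixed finite integer but there are infinitely many $x_j$ in $U_\eps$, this is the desired contradiction. Alternatively, and perhaps more cleanly, I would run the "descending intersection" argument of Lemma \ref{lem:compact_preimages}/\ref{lem:disjoint_preimages} directly: if $x_0$ is an accumulation point, then for every $\eps$ the component $U_\eps$ contains infinitely many points of $f^{-1}\{y_0\}$, and since finitely many disjoint positive-degree subcomponents already force $\deg(f,U_\eps)$ to be arbitrarily large, the fixed value $\deg(f,U_\eps)$ is exceeded — contradiction.

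The main obstacle I anticipate is the bookkeeping needed to guarantee that the finitely many points $x_1,\dots,x_m \in f^{-1}\{y_0\} \cap U_\eps$ can be separated into \emph{disjoint} compactly-contained components $W_1,\dots,W_m$ of a \emph{single} set $f^{-1}\B^n(y_0,\eps_j)$ each of measure $< c$: this needs Lemma \ref{lem:disjoint_preimages} applied to each pair $(x_a,x_b)$ to pick a common $\eps_j$ separating all of them, Lemma \ref{lem:compact_preimages} to keep each $\overline{W_a} \Subset \Omega$, and Corollary \ref{cor:small_preimages} applied at each $x_a$ to shrink $\eps_j$ until every $m_n(W_a) < c$; one then invokes the additivity of $\deg(f,\cdot,y)$ over the components of $U_\eps \cap f^{-1}\B^n(y_0,\eps_j)$ contained in $U_\eps$ (the remaining mass of $U_\eps \setminus f^{-1}\B^n(y_0,\eps_j)$ contributes components whose degree is $\geq 0$ again by Lemma \ref{lem:weakly_sense_preserving}). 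Once this combinatorial setup is in place, the contradiction $\deg(f,U_\eps) \geq m$ for every $m$ is immediate, so the heart of the matter is entirely contained in the already-proven Lemmas \ref{lem:weakly_sense_preserving} and \ref{lem:sense_preserving}; the present lemma is essentially their corollary.
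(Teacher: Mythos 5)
Your proposal is correct and rests on exactly the same machinery as the paper's proof: total disconnectedness from Lemma \ref{lem:fiber_hausd_measure}, Lemmas \ref{lem:compact_preimages}, \ref{lem:disjoint_preimages} and Corollary \ref{cor:small_preimages} to set up small, separated, compactly contained components, non-negativity from Lemma \ref{lem:weakly_sense_preserving}, positivity from Lemma \ref{lem:sense_preserving}, and additivity of the degree over the components of $U_\eps \cap f^{-1}\B^n(y_0,\eps_j)$ via \cite[Theorem 2.7]{Fonseca-Gangbo-book}. The only difference is organizational: the paper derives the contradiction as an infinite strict descent $\deg(f,U_0) > \deg(f,U_1) > \cdots$ of positive integers along nested $x_0$-components, whereas you separate $m$ preimage points into disjoint positive-degree components to conclude $\deg(f,U_\eps) \geq m$ for every $m$; both yield the same contradiction with the finiteness of the degree.
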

\begin{proof}
	We wish to show that every $x \in f^{-1} \{y_0\}$ has a neighborhood that doesn't meet the rest of $f^{-1} \{y_0\}$. By restricting to a series of bounded subdomains, we may assume that $f \in W^{1,n}(\Omega, \R^n)$ and $\qrval \in L^p(\Omega)$.
	
	We assume towards contradiction that every neighborhood of a given $x_0 \in f^{-1} \{y_0\}$ meets $f^{-1} \{y_0\} \setminus x_0$. By Lemma \ref{lem:compact_preimages} and Corollary \ref{cor:small_preimages}, we may pick $\eps_0 > 0$ such that the $x_0$-component $U_0$ of $f^{-1} \B^n(x_0, \eps_0)$ is compactly contained in $\Omega$, and $m_n(U_0) < c$, with $c$ given by Lemma \ref{lem:weakly_sense_preserving}. Then the degree $\deg(f, U_0)$ is a finite integer; note that the finiteness can be immediately seen for example from Lemma \ref{lem:fiber_hausd_measure}, since $J_f$ is integrable.
	
	By our counterassumption, there exists $x_1 \in U_0$ such that $x_1 \in f^{-1}\{y_0\} \setminus x_0$. By Lemma \ref{lem:disjoint_preimages}, we may select $\eps_1 \in (0, \eps_0)$ such that if $U_1$ is the $x_0$-component of $f^{-1} \B^n(y_0, \eps_1)$, then $x_1 \notin U_1$. We let $U_{1, i}$, be the other components of $f^{-1} \B^n(y_0, \eps_1)$ contained in $U_0$, where $i \in I_1$. Then by Lemma \ref{lem:weakly_sense_preserving}, we have $\deg(f, U_{1,i}) \geq 0$ for all $i \in I_1$, since $m_n(U_{1,i}) \leq m_n(U_0) < c$. Moreover, since $x_1$ is necessarily in one of the sets $U_{1,i}$, we must have $\deg(f, U_{1,i}) > 0$ for at least one $i \in I_1$ by Lemma \ref{lem:sense_preserving}.
	
	Now, we may use both parts of \cite[Theorem 2.7]{Fonseca-Gangbo-book} to conclude that
	\[
		\deg(f, U_0) = \deg(f, U_1) + \sum_{i \in I_1} \deg(f, U_{1,i}) 
		> \deg(f, U_1)
	\]
	We can then repeat this procedure to find $U_2, U_3, \dots$ such that $\deg(f, U_1) > \deg(f, U_2) > \dots$. This is however a contradiction, as all degrees $\deg(f, U_i)$ are positive integers by Lemma \ref{lem:sense_preserving}. Hence, we conclude that our claim of discreteness holds.
\end{proof}

The discreteness leads to the definition of a local index for $f$ in $f^{-1} \{y_0\}$, which we recall here; see also e.g.\ \cite[Section I.4]{Rickman_book} or \cite[Definition 2.8]{Fonseca-Gangbo-book}.

\begin{defn}
	Let $\Omega \subset \R^n$ be a connected domain, let $y_0 \in \R^n$, and let $f \colon \Omega \to \R^n$ be a continuous map such that $f^{-1} \{y_0\}$ is a discrete set of $\Omega$. Let $x \in f^{-1} \{y_0\}$, and let $V_1, V_2$ be two neighborhoods of $x$ such that $\overline{V_1}\cap f^{-1} \{y_0\} = \overline{V_1}\cap f^{-1} \{y_0\} = \{x\}$. Then $y_0 \notin f(\partial V_i)$ for both $i \in \{1, 2\}$, and hence $\deg(f, y_0, V_i)$ is well defined for both such $i$. Moreover, we in fact have $\deg(f, y_0, V_1) = \deg(f, y_0, V_1 \cup V_2) = \deg(f, y_0, V_2)$ by using e.g.\ \cite[Theorem 2.7 (2)]{Fonseca-Gangbo-book}.
	
	Hence, if $V$ is a neighborhood of $x$ such that $\overline{V}\cap f^{-1} \{y_0\} = \{x\}$, then we get the same value of $\deg(f, y_0, V)$ regardless of the choice of $V$. This value is the \emph{local index of $f$ at $x$}, and is denoted by $i(x, f)$. 
\end{defn}

Under the assumptions of Theorem \ref{thm:one_point_Reshetnyak}, the discreteness of $f^{-1} \{y_0\}$ combined with Lemmas \ref{lem:compact_preimages} and \ref{lem:disjoint_preimages} implies that for all small enough $\eps > 0$, the $x$-component $U_\eps$ of $f^{-1}\B^n(y_0, \eps)$ is compactly contained in $\Omega$ and satisfies $\overline{U_\eps} \cap f^{-1} \{y_0\} = \{x\}$. Consequently, $i(x, f) = \deg(f, U_\eps)$ for any such $\eps$. Hence, the positive local index -part of Theorem \ref{thm:one_point_Reshetnyak} immediately follows from Lemma \ref{lem:sense_preserving}.

\begin{cor}\label{cor:local_index}
	Let $\Omega \subset \R^n$ be a connected domain, let $y_0 \in \R^n$, and let $f \in W^{1,n}_\loc(\Omega, \R^n)$ be a non-constant map that satisfies \eqref{eq:qrvalue} for a.e.\ $x \in \Omega$, where $K \in [1, \infty)$ and $\qrval \in L^p_\loc(\Omega)$ for some $p > 1$. Then for every $x_0 \in f^{-1} \{y_0\}$, the local index $i(x_0, f)$ is well defined and we have $i(x_0, f) > 0$.
\end{cor}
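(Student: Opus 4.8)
The plan is to show that this corollary is an immediate consequence of the machinery assembled in Sections~\ref{sec:disconnected_preimage}--\ref{sec:value_degree} together with Lemma~\ref{lem:discreteness}. First I would invoke Lemma~\ref{lem:discreteness} to conclude that $f^{-1}\{y_0\}$ is a discrete subset of $\Omega$; this is precisely what is needed in order for the local index $i(x_0,f)$ to be well defined at every $x_0 \in f^{-1}\{y_0\}$ according to the definition preceding the corollary.

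For the positivity $i(x_0,f)>0$, fix $x_0 \in f^{-1}\{y_0\}$. Since $i(x_0,f)$ depends only on the restriction of $f$ to an arbitrarily small neighborhood of $x_0$, I would replace $\Omega$ by a bounded ball $\Omega'$ with $x_0 \in \Omega'$ and $\overline{\Omega'}\subset\Omega$. This leaves $i(x_0,f)$ unchanged, guarantees $f|_{\Omega'}\in W^{1,n}(\Omega',\R^n)$ and $\qrval|_{\Omega'}\in L^p(\Omega')$, and keeps $f|_{\Omega'}$ non-constant: were $f$ constant on $\Omega'$, then since $f(x_0)=y_0$ we would have $f\equiv y_0$ on $\Omega'$, contradicting the discreteness already obtained. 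Note too that $f^{-1}\{y_0\}\cap\Omega'$, being discrete, is totally disconnected, so Lemmas~\ref{lem:compact_preimages} and~\ref{lem:disjoint_preimages} and Corollary~\ref{cor:small_preimages} all apply on $\Omega'$.

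Combining these three results with discreteness, I would pick $\eps>0$ small enough that the $x_0$-component $U_\eps$ of $f^{-1}\B^n(y_0,\eps)$ satisfies simultaneously: $\overline{U_\eps}\subset\Omega'$ is compact, $m_n(U_\eps)<c$ for the constant $c$ of Lemma~\ref{lem:weakly_sense_preserving}, and $\overline{U_\eps}\cap f^{-1}\{y_0\}=\{x_0\}$ (the last using that $f(\partial U_\eps)\subset\partial\B^n(y_0,\eps)$, so boundary points of $U_\eps$ cannot lie in $f^{-1}\{y_0\}$, while for small $\eps$ no other fiber point lies in $U_\eps$). The last property makes $U_\eps$ an admissible neighborhood in the definition of the local index, so $i(x_0,f)=\deg(f,y_0,U_\eps)=\deg(f,U_\eps)$, and Lemma~\ref{lem:sense_preserving} then yields $\deg(f,U_\eps)>0$, which is the claim.

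Since every ingredient is already in hand, I do not expect a genuine obstacle here; the only points requiring a moment's care are that the local index is insensitive to shrinking the domain and that the non-constancy hypothesis of Lemma~\ref{lem:sense_preserving} survives this shrinking, both of which follow from the discreteness of $f^{-1}\{y_0\}$ as indicated above.
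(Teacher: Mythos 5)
Your proposal is correct and follows essentially the same route as the paper: discreteness from Lemma~\ref{lem:discreteness} gives well-definedness of $i(x_0,f)$, and then Lemmas~\ref{lem:compact_preimages} and~\ref{lem:disjoint_preimages} together with Corollary~\ref{cor:small_preimages} produce a small $\eps$ for which $i(x_0,f)=\deg(f,U_\eps)>0$ by Lemma~\ref{lem:sense_preserving}. The only difference is that you spell out the localization to a bounded subdomain and the preservation of non-constancy, which the paper handles implicitly.
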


The final property to be deduced is the local openness property.

\begin{lemma}
	Let $\Omega \subset \R^n$ be a connected domain, let $y_0 \in \R^n$, and let $f \in W^{1,n}_\loc(\Omega, \R^n)$ be a non-constant map that satisfies \eqref{eq:qrvalue} for a.e.\ $x \in \Omega$, where $K \in [1, \infty)$ and $\qrval \in L^p_\loc(\Omega)$ for some $p > 1$. Then for every $x_0 \in f^{-1} \{y_0\}$ and every neighborhood $V$ of $x_0$, we have $y_0 \in \intr f(V)$.
\end{lemma}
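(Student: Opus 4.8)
The plan is to deduce local openness directly from the positivity of the local index, which has already been established in Corollary~\ref{cor:local_index}, via a standard degree-theoretic argument. First I would fix $x_0 \in f^{-1}\{y_0\}$ and an arbitrary neighborhood $V$ of $x_0$. By the discreteness of $f^{-1}\{y_0\}$ (Lemma~\ref{lem:discreteness}) together with Lemmas~\ref{lem:compact_preimages} and~\ref{lem:disjoint_preimages}, I can choose $\eps > 0$ small enough that the $x_0$-component $U_\eps$ of $f^{-1}\B^n(y_0, \eps)$ is compactly contained in $V$ (and in $\Omega$), and satisfies $\overline{U_\eps} \cap f^{-1}\{y_0\} = \{x_0\}$. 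In particular $f\partial U_\eps \subset \partial \B^n(y_0, \eps)$, so $\deg(f, y, U_\eps)$ is defined for every $y \in \B^n(y_0, \eps)$ and, by the connectedness of $\B^n(y_0,\eps)$, equals the constant $\deg(f, U_\eps) = i(x_0, f)$ throughout that ball.

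Next I would invoke Corollary~\ref{cor:local_index} (or Lemma~\ref{lem:sense_preserving} directly) to get $i(x_0, f) = \deg(f, U_\eps) > 0$. The key fact from degree theory is that if $\deg(f, y, U_\eps) \neq 0$ then $y \in f(U_\eps)$; this is the solvability property of the topological degree, available e.g.\ from \cite[Theorem 2.1 or Theorem 2.3]{Fonseca-Gangbo-book}. Applying this to every $y \in \B^n(y_0, \eps)$, for which $\deg(f, y, U_\eps) = i(x_0, f) > 0$, yields $\B^n(y_0, \eps) \subset f(U_\eps) \subset f(V)$. Since $\B^n(y_0, \eps)$ is an open set containing $y_0$, this gives $y_0 \in \intr f(V)$, as desired.

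I do not expect any serious obstacle here: once discreteness and positivity of the local index are in hand, the argument is entirely formal and relies only on the basic solvability and excision properties of the Brouwer degree that were already cited and used earlier in the paper. The only point requiring a small amount of care is the selection of $\eps$ so that $\overline{U_\eps}$ lies inside the given $V$ rather than merely inside $\Omega$; this is handled by Corollary~\ref{cor:small_preimages} (which lets $m_n(U_\eps) \to 0$) combined with the fact that $U_\eps$ shrinks to $\{x_0\}$ as $\eps \to 0^+$, so for small $\eps$ the component $U_\eps$ is contained in any prescribed neighborhood of $x_0$. With that, the proof of Theorem~\ref{thm:one_point_Reshetnyak} is complete.
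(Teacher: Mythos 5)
Your proposal is correct and follows essentially the same route as the paper: positivity of $i(x_0,f)=\deg(f,U_\eps)$ plus the solvability property of the degree gives $\B^n(y_0,\eps)\subset f(U_\eps)$, and the only real work is getting $U_\eps\subset V$ for small $\eps$ (the paper phrases this last step as a contradiction, you phrase it directly, but the content is identical). One small caution: Corollary~\ref{cor:small_preimages} ($m_n(U_\eps)\to 0$) does not by itself force $U_\eps$ into $V$, since small measure does not mean small diameter; the correct justification, which the paper writes out, is that the sets $\overline{U_\eps}\setminus V$ are nested compacta whose total intersection is $\{x_0\}\setminus V=\emptyset$, so one of them is already empty. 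Your phrase ``$U_\eps$ shrinks to $\{x_0\}$'' points at exactly this, so the idea is present, but it is the compactness argument rather than the measure bound that does the job.
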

\begin{proof}
	Suppose towards contradiction that $V$ is an open set containing $x_0 \in f^{-1} \{y_0\}$ and that $y_0 \notin \intr f(V)$. For all $\eps > 0$, we again use $U_\eps$ to denote the $x_0$-component of $f^{-1} \B^n(y_0, \eps)$. By Lemmas \ref{lem:compact_preimages} and \ref{lem:disjoint_preimages}, we may again pick an $\eps_0 > 0$ such that $\overline{U_{\eps_0}}$ is a compact subset of $\Omega$ and $\overline{U_{\eps_0}} \cap f^{-1} \{y_0\} = \{x\}$.
	
	Let $\eps < \eps_0$, in which case $\overline{U_{\eps}}$ is a compact subset of $\Omega$ and $\overline{U_{\eps}} \cap f^{-1} \{y_0\} = \{x\}$. By Corollary \ref{cor:local_index} we have $i(x_0, f) > 0$, so the definition of local index implies that $\deg(f, U_\eps) > 0$. In particular, by the definition of $\deg(f, U_\eps)$, this implies that $\deg(f, y, U_\eps) > 0$ for all $y \in \B^n(y_0, \eps)$. However, $\deg(f, y, U_\eps)$ can be non-zero only if $y \in f(U_\eps)$; see e.g.\ \cite[Theorem 2.1]{Fonseca-Gangbo-book}. We conclude that $f(U_\eps) = \B^n(y_0, \eps)$.
	
	By our counterassumption, $f(V)$ does not contain $\B^n(y_0, \eps) = f(U_\eps)$. We must hence have that $U_\eps \setminus V$ is non-empty. Now, $\overline{U_\eps} \setminus V$ with $\eps \in (0, \eps_0)$ form a decreasing family of non-empty compact subsets of $\Omega$. Hence, their intersection is non-empty. However, this is a contradiction, since
	\[
		\bigcap_{\eps \in (0, \eps_0)} (\overline{U_\eps} \setminus V) = \left( \bigcap_{\eps \in (0, \eps_0)}\overline{U_\eps} \right) \setminus V = \{x_0\} \setminus V = \emptyset.
	\]
	Our original claim therefore holds.
\end{proof}

\section{Other proofs}\label{sec_other_proofs}

We then prove the remaining results from the introduction, starting with Theorem \ref{thm:equivalence}. Before proceeding with the proof, we need to show that if $f$ satisfies \eqref{eq:qrvalue} with a higher integrable $\qrval \in L^p_\loc(\Omega), p > 1$, then $\abs{Df}$ also has higher integrability. The proof is by Gehring's lemma. Note that this result can also be used as an alternate proof of the local H\"older continuity of such $f$, although it does not yield a sharp exponent like the proof in \cite{Kangasniemi-Onninen_Heterogeneous}.

\begin{lemma}\label{lem:higher_integrability}
	Let $\Omega \subset \R^n$ be an open domain, let $f \in W^{1,n}(\Omega, \R^n)$, and let $K \geq 0$. Suppose that $f$ satisfies \eqref{eq:qrvalue}, where $\qrval \in L^p_\loc(\Omega)$ with $p > 1$. Then there exists $\beta > 1$ such that $f \in W^{1,\beta n}_\loc(\Omega)$.
\end{lemma}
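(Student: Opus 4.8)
The goal is a reverse Hölder / self-improving integrability estimate for $\abs{Df}$, so the natural tool is Gehring's lemma in the form that a reverse Hölder inequality $\fint_B \abs{Df}^n \lesssim (\fint_{2B} \abs{Df}^{n\theta})^{1/\theta} + \fint_{2B} g$ with a sufficiently integrable right-hand side perturbation $g$ self-improves the exponent. The plan is to derive such a Caccioppoli-type reverse Hölder inequality on balls from the distortion inequality \eqref{eq:qrvalue}, then quote Gehring's lemma (see e.g.\ \cite{Gehring, Iwaniec-GehringLemma, Meyers-Elcrat_HigherInt}) to conclude.

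\textbf{Step 1: Caccioppoli inequality.} Fix a ball $B=\B^n(x_0,r)$ with $2B$ compactly contained in $\Omega$, and a cutoff $\eta\in C_0^\infty(2B)$ with $\eta\equiv 1$ on $B$, $\abs{\nabla\eta}\lesssim 1/r$. Test the distortion inequality against $\eta$: integrate $\abs{Df}^n\eta^n \le K J_f \eta^n + \qrval\,\abs{f-y_0}^n\eta^n$. For the Jacobian term, use the standard fact that for $W^{1,n}$ maps one has $\int J_f\,\eta^n = -n\int \eta^{n-1}(f-c)\cdot(\operatorname{cof} Df)\nabla\eta$ for any constant vector $c$ (integration by parts / null-Lagrangian structure; this is exactly the step used in the classical proof that \eqref{eq:qr-def} implies higher integrability). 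Choosing $c$ to be the average of $f$ over $2B$, estimating $\abs{\operatorname{cof} Df}\le \abs{Df}^{n-1}$, and applying Young's inequality with a small parameter to absorb the resulting $\abs{Df}^n$-term, we obtain
\[
  \int_B \abs{Df}^n \le \frac{C(n,K)}{r^n}\int_{2B}\abs{f-(f)_{2B}}^n + \int_{2B}\qrval\,\abs{f-y_0}^n .
\]
The second term needs care: since $f$ need not be close to $y_0$ on $2B$, bound $\abs{f-y_0}^n \le 2^{n-1}(\abs{f-(f)_{2B}}^n + \abs{(f)_{2B}-y_0}^n)$; the first piece combines with a Sobolev–Poincaré estimate, while $\abs{(f)_{2B}-y_0}^n$ is a constant depending on the ball that, after dividing by $m_n(2B)$ and using $\qrval\in L^p_\loc$ with Hölder's inequality, produces a term of the form $\big(\fint_{2B}\qrval^p\big)^{1/p}\cdot C(f,B)$. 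This is the reason the statement is only local.

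\textbf{Step 2: Sobolev–Poincaré and the reverse Hölder inequality.} Apply the Sobolev–Poincaré inequality $\big(\fint_{2B}\abs{f-(f)_{2B}}^n\big)^{1/n}\le C_n r\big(\fint_{2B}\abs{Df}^{n_*}\big)^{1/n_*}$ with $n_* = n^2/(n+n) = n/ \ ?$ — more precisely with $n_* = \frac{n\cdot n}{n+n}$; the correct exponent is any $\theta<1$ with $n\theta > $ the Sobolev conjugate threshold, i.e.\ $\theta = \frac{n}{n+1}<1$ suffices so that $(n\theta)^* = n$. Dividing the Caccioppoli inequality by $m_n(B)$ and inserting this gives a genuine reverse Hölder inequality
\[
  \fint_B \abs{Df}^n \le C\Big(\fint_{2B}\abs{Df}^{n\theta}\Big)^{1/\theta} + C\,\fint_{2B} h,
\]
where $h = \qrval\,\abs{f-(f)_{2B}}^n + (\text{the constant-on-}B\text{ term})$; crucially $h\in L^q_\loc$ for some $q>1$, since $\qrval\in L^p_\loc$ with $p>1$ and $\abs{f-(f)_{2B}}^n\in L^{1+\delta}_\loc$ by the (already known) Sobolev regularity of $f$, and a product of two $L^{>1}$ functions lies in $L^{>1}$ by Hölder.

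\textbf{Step 3: Gehring's lemma.} Since the perturbation $h$ is in $L^q_\loc$ with $q>1$, Gehring's lemma applies and yields $\beta>1$ with $\abs{Df}^n\in L^\beta_\loc(\Omega)$, i.e.\ $f\in W^{1,\beta n}_\loc(\Omega)$, which is the claim.

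\textbf{Main obstacle.} The delicate point is Step 1: handling the term $\int \qrval\,\abs{f-y_0}^n$ without any smallness or boundedness of $f-y_0$. One must be content with a \emph{local}, $B$-dependent constant there, and must check that after normalization it still feeds into Gehring's lemma as an admissible $L^{>1}$ perturbation rather than something that destroys the scaling structure. A clean way is to run the whole argument on a fixed relatively compact subdomain $\Omega'\Subset\Omega$, over which $f$ is bounded (by continuity) and $\qrval\in L^p$, so that $\abs{f-y_0}^n\qrval \in L^p(\Omega')$ outright; then the reverse Hölder inequality has an $L^p$ perturbation uniformly and Gehring applies on $\Omega'$. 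Taking an exhaustion by such $\Omega'$ gives the $W^{1,\beta n}_\loc$ conclusion (with $\beta$ possibly depending on $\Omega'$, which is all that is claimed).
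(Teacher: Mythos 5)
Your proposal is correct and follows essentially the same route as the paper's proof: a Caccioppoli inequality for the Jacobian term, Sobolev--Poincar\'e at the exponent $n^2/(n+1)$, and Gehring's lemma with the perturbation $\qrval\,\abs{f-y_0}^n$. The only (immaterial) difference is in how the $L^{q}$-integrability with $q>1$ of that perturbation is justified: the paper uses the Sobolev embedding $f-y_0\in L^q_\loc(\Omega)$ for every $q<\infty$ together with H\"older's inequality, rather than your decomposition around the mean or local boundedness of $f$.
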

\begin{proof}
	Let $Q$ be a cube with side length $r>0$, let $2Q$ denote the cube with the same center and and side length $2r$, and suppose that $\overline{2Q} \subset \Omega$. We may choose a cutoff function $\eta \in C^\infty_0(2Q)$ such that $\eta \geq 0$, $\eta \equiv 1$ on $Q$, and $\abs{\nabla \eta} \leq 4/r$. We use \eqref{eq:qrvalue} and a Caccioppoli inequality to obtain for every $c \in \R^n$ that
	\begin{multline*}
		\frac{1}{r^n} \int_{\Omega} \eta^n \abs{Df}^n
		\leq \frac{K}{r^n} \int_{\Omega} \eta^n J_f + \frac{1}{r^n} \int_{\Omega} \eta^n \qrval \abs{f-y_0}^n\\
		\leq \frac{C_1(n) K}{r^n} \int_{\Omega} \eta^{n-1} \abs{\nabla \eta} \abs{Df}^{n-1} \abs{f - c} + \frac{1}{r^n} \int_{\Omega} \eta^n \qrval \abs{f-y_0}^n.
	\end{multline*}
	In particular, we have
	\[
		\frac{1}{r^n} \int_{Q} \abs{Df}^n \leq  \frac{C_2(n) K}{(2r)^{n+1}} \int_{2Q} \abs{Df}^{n-1} \abs{f - c}
		+ \frac{1}{(2r)^n} \int_{2Q} 2^n \qrval \abs{f-y_0}^n.
	\]
	By a use of H\"older's inequality and the Sobolev-Poincar\'e inequality, the right choice of $c$ lets us estimate the first right-hand side term by
	\begin{multline*}
		\frac{1}{(2r)^{n+1}} \int_{2Q} \abs{Df}^{n-1} \abs{f - c}\\
		\leq \frac{1}{r} \left(\frac{1}{(2r)^{n}} \int_{2Q} \abs{Df}^{\frac{n^2}{n+1}} \right)^\frac{n^2 - 1}{n^2}
		\left(\frac{1}{(2r)^{n}} \int_{2Q} \abs{f - c}^{n^2} \right)^\frac{1}{n^2}\\
		\leq \frac{1}{r} \left(\frac{1}{(2r)^{n}} \int_{2Q} \abs{Df}^{\frac{n^2}{n+1}} \right)^\frac{n^2 - 1}{n^2}
		C_3(n) r \left(\frac{1}{(2r)^{n}} \int_{2Q} \abs{Df}^{\frac{n^2}{n+1}} \right)^\frac{n+1}{n^2}\\
		= C_3(n) \left(\frac{1}{(2r)^{n}} \int_{2Q} \abs{Df}^{\frac{n^2}{n+1}} \right)^\frac{n+1}{n}.
	\end{multline*}
	We conclude that
	\begin{multline*}
		\left( \frac{1}{r^n} \int_{Q} \abs{Df}^n \right)^\frac{n}{n+1}\\ 	\leq \frac{C_4(n) K}{(2r)^{n}} \int_{2Q} \abs{Df}^{\frac{n^2}{n+1}}
		+ \left(\frac{1}{(2r)^n} \int_{2Q} C_5(n) \qrval \abs{f-y_0}^n\right)^\frac{n}{n+1}.
	\end{multline*}
	Now, by a version of Gehring's lemma (see e.g.\ \cite[Proposition 6.1]{Iwaniec-GehringLemma}), it follows for small enough $\beta > 1$ that $\abs{Df}^n \in L^{\beta}_\loc(\Omega)$ if $\qrval\abs{f-y_0}^n \in L^{\beta}_\loc(\Omega)$. Such a $\beta > 1$ exists, since $\qrval \in L^{p}_\loc(\Omega)$ with $p > 1$, and since $f-y_0 \in L^q_\loc(\Omega)$ for all $q \in [1, \infty)$ by the Sobolev embedding theorem. The claim then follows.
\end{proof}

We then prove Theorem \ref{thm:equivalence}.

\begin{proof}[Proof of Theorem \ref{thm:equivalence}]
	It is immediately obvious that if $f$ is $K$-quasiregular, then we may pick $\qrval_{y_0} \equiv 0$ for every $y_0 \in \R^n$. The actual content of the theorem is hence the opposite direction.
	
	Suppose then that we have for every $y_0 \in \R^n$ a $\qrval_{y_0} \in L^p_\loc(\Omega)$ such that \eqref{eq:qrvalue} holds, where $p > 1$. The higher integrability for any single $\qrval_{y_0}$ implies by Lemma \ref{lem:higher_integrability} that $f \in W^{1,p'}_\loc(\Omega)$ with $p' > n$, which in turn implies that $f$ is almost everywhere differentiable (see e.g.\ \cite[Theorem 6.5]{Evans-Gariepy-book}). This is in fact all we need higher integrability for, and from now on it is enough to know that $\qrval_{y_0} \in L^1_\loc(\Omega)$ for every $y_0 \in \R^n$.
	
	We then let $x_0$ be a Lebesgue point of both $\abs{Df}^n$ and $J_f$ such that $f$ is differentiable at $x_0$. Due to the almost everywhere differentiability of $f$, this holds at almost every point $x_0 \in \Omega$. Our goal is to prove that $\abs{Df(x_0)} \leq K J_f(x_0)$.
	
	Since $f$ is differentiable at $x_0$, we also have
	\[
		\limsup_{x \to x_0} \frac{\abs{f(x) - f(x_0)}}{\abs{x - x_0}} = \abs{Df(x_0)}.
	\]
	If $\abs{Df(x_0)} = 0$, then $Df(x_0) = 0$ and hence $\abs{Df(x_0)} \leq K J_f(x_0)$. In particular, we may assume that $\abs{Df(x_0)} > 0$, and we're hence able to select $r_0 > 0$ such that $\B^n(x_0, r_0) \subset \Omega$ and
	\[
		\frac{\abs{f(x) - f(x_0)}}{\abs{x - x_0}} \leq 2\abs{Df(x_0)}
	\]
	for a.e.\ $x \in \B^n(x_0, r_0)$. Now, if $B_r = \B^n(x_0, r)$ with $r \in (0, r_0)$, then
	\[
		\abs{Df(x)}^n \leq K J_f(x) + 2 \abs{Df(x_0)}^n \abs{x - x_0}^n \qrval_{f(x_0)}
	\]
	for a.e.\ $x \in B_r$.
	
	Taking average integrals, we end up with
	\[
		\frac{1}{\omega_n r^n} \int_{B_r} \abs{Df(x)}^n
		\leq \frac{K}{\omega_n r^n} \int_{B_r} J_f(x)
		+ \frac{2\abs{Df(x_0)}^n}{\omega_n} \int_{B_r} \frac{\abs{x - x_0}^n}{r^n} \qrval_{f(x_0)}(x).
	\]
	Due to our choice of $x_0$ as a Lebesgue point of $\abs{Df}^n$ and $J_f$, the first two integrals converge to $\abs{Df(x_0)}^n$ and $K J_f(x_0)$ respectively as $r \to 0$. The last integral on the other hand converges to zero, since $\abs{x - x_0}^n < r^n$ in $B_r$ and since $\qrval_{f(x_0)} \in L^1_\loc(\Omega)$. Consequently, we get the desired
	\[
		\abs{Df(x_0)}^n \leq K J_f(x_0)
	\]
	in the limit, completing the proof.
\end{proof}

Next, we prove Theorem \ref{thm:small_K}.

\begin{proof}[Proof of Theorem \ref{thm:small_K}]
	Suppose that $y_0$ is a $(K, \qrval)$-quasiregular value of $f$, where we have assumed instead of $K \geq 1$ that $K \in [0, 1)$. Note that $f$ has a continuous representative by Lemma \ref{lem:higher_integrability}. Since $J_f \leq \abs{Df}$, we hence get
	\[
		\abs{Df}^n \leq (1 - K)^{-1} \qrval \abs{f - y_0}^n. 
	\]
	We then define $u \colon \Omega \to [0, \infty]$ by $u = \max(0, \log \abs{f - y_0}^{-2})$, and set $u_k = \min(u, k)$ for all $k \in \Z_{>0}$. Then we have $\abs{\nabla u_k} \equiv 0$ a.e.\ in the region where $\abs{f - y_0} \leq e^{-k/2}$, and 
	\[
		\abs{\nabla u_k}^n \leq \frac{\abs{Df}^n}{\abs{f - y_0}^n} \leq \frac{\qrval}{1-K}
	\]
	a.e.\ in the region where $\abs{f - y_0} > e^{-k/2}$. We may hence use Lemma \ref{lem:loglog_sobolev} and the higher integrability of $\qrval$ to conclude that either $f \equiv y_0$ in $\Omega$, or $u \in W^{1,p}_\loc(\Omega)$ for some $p > n$. In the latter case, we have that $u$ is locally essentially bounded by the Sobolev embedding theorem. Since $u(x) \to \infty$ when $d(x,f^{-1} \{y_0\}) \to 0$, this is only possible if $y_0 \notin f(\Omega)$.
\end{proof}

The last remaining result to prove is then Corollary \ref{cor:argument_principle}, which as we stated before is a standard degree theory argument now that we have Theorem \ref{thm:one_point_Reshetnyak}. We regardless give the proof for the convenience of the reader.

\begin{proof}[Proof of Corollary \ref{cor:argument_principle}]
	By our assumptions, we find $r > 0$ and $R > 0$ such that $\abs{f_2(x) - y_0} \geq 2r$ and $\abs{f_2(x) - f_1(x)} < r$ when $\abs{x} \geq R$. Notably, we also have $\abs{f_1(x) - y_0} \geq r$ when $\abs{x} \geq R$. In particular, $f_1^{-1}\{y_0\}$ and $f_2^{-1} \{y_0\}$ are both fully contained in $\B^n(0, R)$. 
	
	We then define a standard line homotopy $H \colon \R^n \times [0,1] \to \R^n$ between $f_1$ and $f_2$:
	\[
		H(x, t) = (1 - t) f_1(x) + t f_2(x).
	\]
	Since $f_i$ are continuous (again by \cite[Theorem 1.1]{Kangasniemi-Onninen_Heterogeneous} or Lemma \ref{lem:higher_integrability}), $H$ is also continuous. Moreover, if $\abs{x} \geq r$, we then have for every $t \in [0, 1]$ that
	\[
		\abs{H(x, t) - y_0} \geq \abs{f_2(x) - y_0} - \abs{H(x, t) - f_2(x)} > 2r - r > 0.
	\]
	In particular, $H(x, t) \neq y_0$ when $(x, t) \in \partial \B^n(0, R) \times [0, 1]$. Hence, we obtain the equivalence of topological degrees
	\[
		\deg(f_1, y_0, \B^n(0, R)) = \deg(f_2, y_0, \B^n(0, R));
	\]
	see e.g.\ \cite[Theorem 2.3 (2)]{Fonseca-Gangbo-book}.
	
	Furthermore, for each $i \in \{1, 2\}$, since $f_i^{-1} \{y_0\} \subset \B^n(0, R)$, we have that $f_i^{-1} \{y_0\}$ is a closed, bounded, discrete subset of $\R^n$ due to Theorem \ref{thm:one_point_Reshetnyak}. Hence, $f_i^{-1} \{y_0\}$ is a finite subset of $\B^n(0, R)$, and we may hence use e.g.\ \cite[Theorem 2.9 (1)]{Fonseca-Gangbo-book} to conclude that 
	\[
		\deg(f_i, y_0, \B^n(0, R)) = \sum_{x \in f_i^{-1}\{y_0\}} i(x, f_i).
	\]
	The claimed local index sum formula hence holds. The other claim also immediately follows by combining the local index sum formula with the positive local index part of Theorem \ref{thm:one_point_Reshetnyak}.
\end{proof}

\section{Examples}\label{sect:examples}

We begin with an example that the higher integrability of $\qrval$ is mandatory for Theorem \ref{thm:one_point_Reshetnyak}. 

\begin{ex}\label{ex:higher_int_needed}
	Let $A$ be any compact set of zero $n$-capacity such that $A \subset \B^n(0, 1)$. Then there exists a $\varphi \in W^{1,n}(\B^n(0, 1))$ such that $\varphi \geq 0$ and $\lim_{x \to x_0} \varphi(x) = \infty$ for every $x_0 \in A$. Indeed, by the definition of zero $n$-capacity, there exist smooth functions $\eta_i \in C^\infty_0(\B^n(0, 1))$ such that $\eta_i \geq 0$ everywhere, $\eta_i \geq 1$ on $A$, and $\norm{\nabla \eta_i}_{L^n} \leq 2^{-i}$. Since we also have $\norm{\eta_i}_{L^n} \leq C 2^{-i}$ by the Poincar\'e inequality, the infinite sum $\sum_{i \in \Z_{>0}} \eta_i$ then converges to a function in $W^{1,n}(\B^n(0, 1))$ which we may choose as our $\varphi$.
	
	We then select $\qrval = \abs{\nabla \varphi}^n \in L^1(\B^n(0, 1))$, and define
	\[
		f(x) = (\exp(-\varphi(x)), 0, \dots, 0).
	\]
	Now, $J_f \equiv 0$ everywhere, $f \in W^{1,n}(\B^n(0, 1))$, and $f$ satisfies $\abs{Df(x)}^n \leq \qrval(x) \abs{f(x)-0}^n$ for a.e.\ $x \in \B^n(0, 1)$. The image of $f$ has no interior points, so the third condition of Theorem \ref{thm:one_point_Reshetnyak} cannot hold for any $x_0 \in f^{-1}\{0\}$. The same is true of the second condition, since for every $r \in (0, 1)$ there are points in $\B^n(0, r)$ which are not in the image set of $f$, and hence we must have $\deg(f, U) = 0$ for every component $U$ of $f^{-1}\B^n(0, r)$ with $\overline{U} \subset \B^n(0, r)$.
	
	However, we also observe that $A \subset f^{-1} \{0\}$. Hence, any non-empty $A$ of zero $n$-capacity will yield a counterexample to the second and third conditions of Theorem \ref{thm:one_point_Reshetnyak} when only $\qrval \in L^1_\loc(\Omega)$. We may for instance use $A = \{0\}$, for which an explicit choice of $\varphi$ is given e.g.\ by $\varphi(x) = \log^\gamma(1 + \abs{x}^{-2})$ where $\gamma \in (0, (n-1)/n)$. Choosing an $A$ with an accumulation point will similarly yield a counterexample to the first condition; note that any countable $A$ has zero $n$-capacity by e.g. \cite[Lemma 2.8]{Heinonen-Kilpelainen-Martio_book}.
	
	We point out that this same method of producing counterexamples naturally fails if we require $\qrval \in L^{1+\eps}_\loc(\Omega)$. This is since we would then instead require that $A$ is of zero $(n + n\eps)$-capacity; however, even singletons $\{x_0\}$ don't have zero $p$-capacity if $p > n$.
\end{ex}

Next, we give another simple example which shows that, if $f \colon \Omega \to \R^n$ has a $(K, \qrval)$-quasiregular value at $y_0$ and $x_0 \in f^{-1} \{y_0\}$, then $f$ is not necessarily quasiregular in any neighborhood of $x_0$. This shows that the property of $f$ having a quasiregular value at $f(x_0)$ is different from $f$ being locally quasiregular near $x_0$.

\begin{ex}\label{ex:not_local_qr}
	We begin our construction by taking a collection of balls $B_i = \B^n(x_i, r_i), i \in \Z_{>0}$, where we have chosen $x_i = (2^{-i}, 0, 0, \dots, 0)$ and $r_i = 2^{-6i}$. It is clear that these balls are disjoint. We denote $B = \bigcup_i B_i$. We then define a map $f \colon \R^n \to \R^n$ by $f(x) = x$ when $x \in \R^n \setminus B$, and by
	\[
		f(x) = x + 2\left( r_i - \abs{x-x_i}\right) \frac{x_i}{\abs{x_i}} 
	\]
	when $x \in B_i$ for some $i \in \Z_{>0}$. Visually, the map shifts the center of each ball $B_i$ from $x_i$ to $x_i + (2r_i, 0, \dots, 0)$. In particular, this map has negative Jacobian in a region of positive measure in each $B_i$.
	
	We clearly have $\abs{Df} \leq C$ in $B_i$, with $C$ independent of $i$. Hence, $f$ is a Lipschitz map on $\R^n$. Moreover, we also have $\abs{f} \geq \abs{x_i} - r_i = 2^{-i} - 2^{-6i} > 2^{-2i}$. Hence, if we define
	\[
		\qrval \equiv 2 C^{n} 2^{2ni}
	\]
	in $B_i$, then $\abs{Df}^n + J_f^- \leq 2\abs{Df}^n \leq \abs{f}^n \qrval$ in $B_i$. The integral of $\qrval^2$ over $B_i$ is $(\omega_n 2^{-6ni}) (4 C^{2n} 2^{4ni}) = C' 2^{-2ni}$. Consequently, we have
	\[
		\sum_{i=1}^\infty \norm{\qrval}_{L^2(B_i)} < \infty.
	\]
	We then choose $\qrval \equiv 0$ in $\R^n \setminus B$, and conclude that $f$ has a $(1, \qrval)$-quasiregular value at $0$ with $\qrval \in L^2(\R^n)$. However, $f^{-1}\{0\} = \{0\}$, and there is no neighborhood of $0$ where $f$ is $K$-quasiregular, since every such neighborhood contains a ball $B_i$ which in turn contains a region where $f$ has negative Jacobian. 
\end{ex}

We then give a few more simple examples which illustrate how much the sets of quasiregular values can vary for different functions $f \in W^{1,n}_\loc(\Omega, \R^n)$. We leave the precise computations for the interested reader.

\begin{ex}\label{ex:not_in_sphere}
	Let $h \colon [0, \infty) \to \R$ be a piecewise linear function defined by
	\[
		h(t) = \begin{cases}
			t, & 0 \leq t < 1,\\
			1, & 1 \leq t < 2,\\
			t-1, & 2 \leq t.  
		\end{cases}
	\]
	We define $f \colon \R^n \to \R^n$ radially by
	\begin{equation}\label{eq:radial_f}
		f(x) = h(\abs{x})\frac{x}{\abs{x}}.
	\end{equation}
	Then $f \in W^{1,n}_\loc(\R^n, \R^n)$, and $f$ has a $(2, \qrval_{y_0})$-quasiregular value at every $y_0 \in \R^n \setminus \partial \B^n(0, 1)$ for some $\qrval_{y_0} \in L^\infty_\loc(\R^n)$. The map $f$ however has no quasiregular values at the points of $\partial \B^n(0, 1)$.
\end{ex}

\begin{ex}\label{ex:only_in_one_point}
	Similarly, one can find $0 = t_0 < t_1 < t_2 < \dots$ such that there exists a piecewise linear function $h \colon [0, \infty) \to \R$ as follows (see Figure \ref{fig:zigzag_h} for an illustration):
	\begin{itemize}
		\item $h(0) = 0$;
		\item $h(t_{2k - 1}) = 2^{k-1}$ and $h(t_{2k}) = 2^{-k}$ when $k \in \Z_{>0}$;
		\item $h'(t) \equiv 1$ on $(t_{2k}, t_{2k+1})$ and $h'(t) \equiv -1$ on $(t_{2k+1}, t_{2k+2})$ when $t \in \Z_{\geq 0}$.
	\end{itemize}
	If we use this $h$ to again define $f \colon \R^n \to \R^n$ by \eqref{eq:radial_f}, then $f \in W^{1,n}_\loc(\R^n, \R^n)$ and $f$ has a $(1, \qrval)$-quasiregular value at $0$ for some $\qrval \in L^\infty_\loc(\R^n)$. However, $f$ has no other quasiregular values $y_0 \in \R^n \setminus \{0\}$, as for instance every $f^{-1}\{y_0\}$ with $y_0 \in \R^n \setminus \{0\}$ contains points where $f$ has a negative local index.
	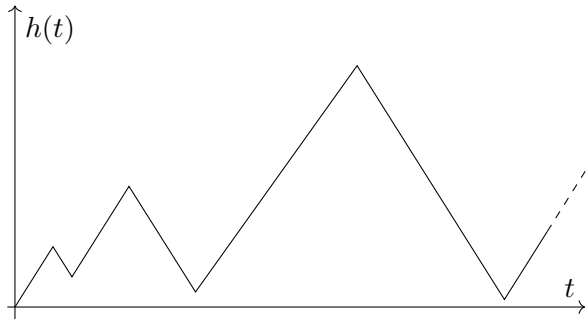
\begin{figure}[h]
		\begin{tikzpicture}[xscale=0.5, yscale=0.8]
			\draw[->] (-0.2, 0) -- (15, 0) node[anchor=south east]{$t$};
			\draw[->] (0, -0.2) -- (0, 5) node[anchor=north west]{$h(t)$};
			\draw (0,0) -- (1,1) -- (1.5, 0.5) -- (3, 2) -- (4.75, 0.25) -- 	(9, 4) -- (12.875, 0.125) -- (14, 1.25);
			\draw[dashed] (14, 1.25) -- (15, 2.25);
		\end{tikzpicture}
		\caption{\small An illustration of the function $h$: the peaks increase in height, while the valleys reach increasingly close to zero.}\label{fig:zigzag_h}
	\end{figure}
\end{ex}

Notably, Examples \ref{ex:not_in_sphere} and \ref{ex:only_in_one_point} show that under our given definitions, the set of quasiregular values of a continuous $W^{1,n}_\loc$-map is neither always open, nor always closed. Moreover, Example \ref{ex:not_in_sphere} shows that, in order for $f$ to be $K$-quasiregular, it is not enough to assume that almost every $y_0 \in \R^n$ is a $(K, \qrval_{y_0})$-quasiregular value of $f$.


\bibliographystyle{abbrv}
\bibliography{sources}

\end{document}